\def\demo{\noindent{\bf Proof. }}
\def\sqr#1#2{{\vcenter{\hrule height.#2pt
        \hbox{\vrule width.#2pt height#1pt \kern#1pt
                \vrule width.#2pt}
        \hrule height.#2pt}}}
\def\square{\mathchoice\sqr64\sqr64\sqr{4}3\sqr{3}3}
\def\QED{\hfill$\square$}
\newtheorem{Theorem}{Theorem}[section]
\newtheorem{Lemma}[Theorem]{Lemma}
\newtheorem{Corollary}[Theorem]{Corollary}
\newtheorem{Proposition}[Theorem]{Proposition}
\newtheorem{Conjecture}[Theorem]{Conjecture}
\newtheorem{Notation and Discussion}[Theorem]{Notation and Discussion}
\newtheorem{Set-up}[Theorem]{Set-up}
\newtheorem{Assumptions and Discussion}[Theorem]{Assumptions and Discussion}
\newtheorem{Remark}[Theorem]{Remark}
\newtheorem{Example}[Theorem]{Example}
\newtheorem{Question}[Theorem]{Question}
\newcommand{\ul}[1]{\underline{#1}}
\newcommand{\PN}{\mathbb{P}_{k}^N}
\def\Ass{{\rm Ass}}
\begin{document}

\baselineskip=16pt

\title[Chudnovsky's conjecture for very general points in $\mathbb{P}_k^N$]
{\Large\bf Chudnovsky's conjecture for very general points in $\mathbb{P}_k^N$}

\author[L. Fouli, P. Mantero  and Y. Xie]
{Louiza Fouli, Paolo Mantero   \and Yu Xie}

\address{Department of Mathematical Sciences,  New Mexico State University, Las Cruces, New Mexico 88003} \email{lfouli@nmsu.edu}

 \address{Department of Mathematical Sciences,  University of Arkansas, Fayetteville, Arkansas  72701} \email{pmantero@uark.edu}

\address{Department of Mathematics, Widener University,
Chester, Pennsylvania 19013} \email{yxie@widener.edu}

\address{} \email{}

\

\subjclass[2010]{13F20, 11C08}
\keywords{Chudnovsky's conjecture, initial degrees, symbolic powers, fat points,  Seshadri constant}

\thanks{The first author was partially supported by a grant from the Simons Foundation, grant \#244930. }

\vspace{-0.1in}

\begin{abstract}
We prove a long-standing conjecture of Chudnovsky for very general and generic points in $\mathbb{P}_k^N$, where $k$ is an algebraically closed field of characteristic zero, and for any finite set of points lying on a quadric, without any assumptions on $k$. We also prove that for any homogeneous ideal $I$ in the homogeneous coordinate ring $R=k[x_0, \ldots, x_N]$, Chudnovsky's conjecture holds for large enough symbolic powers of $I$.
\end{abstract}

\maketitle

\vspace{-0.2in}

\section{Introduction}
This manuscript deals with the following general interpolation question:

\begin{Question}\label{alpham}
 Given a finite set of $n$ distinct points $X=\{\mathfrak{p_1},\ldots,\mathfrak{p_n}\}$ in $\mathbb{P}_k^N$, where $k$ is a field, what is the minimum degree, $\alpha_m(X)$, of a hypersurface $f\neq 0$ passing through each $\mathfrak{p_i}$ with multiplicity at least $m$?
 \end{Question}

Question~\ref{alpham}  has been considered in various forms for a long time. We mention a few conjectures and motivations. For instance, this question plays a crucial role in the proof of Nagata's counterexamples to Hilbert's fourteenth problem \cite{Na}. In the same paper Nagata conjectured that $\alpha_m(X)>m\sqrt{n}$ for sets of $n$ general points in $\mathbb P_{\mathbb C}^2$ \cite{Na}, and a vast number of papers in the last few decades are related to his conjecture.
Another reason for the interest sparked by the above question comes from the context of complex analysis: an answer to Question~\ref{alpham} would provide information about the Schwarz exponent, which is very important in the investigation of the arithmetic nature of values of Abelian functions of several variables \cite{Ch}.

However, besides a few very special classes of points (e.g., if these $n$ points lie on a single hyperplane or one has $n=\binom{\beta+N-1}{N}$ points forming a star configuration and $m$ is a multiple of $N$ \cite{De},\cite{BH}), at the moment a satisfactory answer to this elusive question appears out of reach.
Therefore, there has been interest in finding effective lower bounds for $\alpha_m(X)$. In fact, lower bounds for $\alpha_m(X)$ yield upper bounds for the Schwarz exponent.
Using complex analytic techniques, Waldschmidt \cite{W} and Skoda \cite{Sk} in 1977  proved that for all $m\geq 1$
$$
\frac{\alpha_m(X)}{m}\geq \frac{\alpha(X)}{N},
$$
where $\alpha(X)=\alpha_1(X)$ is the minimum degree of a hypersurface passing through every point of $X$ and $k=\mathbb C$.
In 1981, Chudnovsky ~\cite{Ch} improved the inequality in the 2-dimensional projective space. He showed that if $X$ is a set of $n$ points in $\mathbb{P}_{\mathbb{C}}^2$,\, then for all $m\geq 1$
 $$
 \displaystyle\frac{\alpha_m(X)}{m}\geq \frac{\alpha(X)+1}{2}.
 $$
 He then raised the following conjecture for higher dimensional projective spaces:
\begin{Conjecture}[Chudnovsky \cite{Ch}]\label{CC1}
\,\, If $X$ is a finite set of points in $\mathbb{P}_{\mathbb{C}}^N$, then for all $m\geq 1$
$$
\displaystyle\frac{\alpha_m(X)}{m}
\geq \frac{\alpha(X)+N-1}{N}.
$$
\end{Conjecture}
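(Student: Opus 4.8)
\emph{Reduction.} Write $I=I_X\subseteq R$ for the homogeneous ideal of $X$, so that $\alpha_m(X)=\alpha\bigl(I^{(m)}\bigr)$ is the initial degree of the $m$-th symbolic power, and put $a=\alpha(X)=\alpha(I)$. Since $\alpha_m(X)/m\ge\widehat{\alpha}(X):=\inf_{m\ge 1}\alpha_m(X)/m=\lim_{m\to\infty}\alpha_m(X)/m$ (the Waldschmidt constant), and since this limit also equals $\lim_{m\to\infty}\alpha_{Nm}(X)/(Nm)$, Chudnovsky's inequality for every $m$ is equivalent to the single family of inequalities
\[
\alpha_{Nm}(X)\ \ge\ m\,(a+N-1)\qquad\text{for all }m\ge 1 .
\]
On the blow-up $\pi\colon Y\to\mathbb P_k^N$ of $X$, with exceptional divisors $E_1,\dots,E_n$ and $H=\pi^*\mathcal O(1)$, this asserts that $\tfrac{a+N-1}{N}H-\sum_i E_i$ lies on the boundary of, or outside, the pseudoeffective cone of $Y$; equivalently, it is the symbolic-power containment $I^{(Nm)}\subseteq\mathfrak m^{\,m(N-1)}I^{\,m}$ for all $m$ (where $\mathfrak m=(x_0,\dots,x_N)$), because a degree count using $\alpha(I^m)=m\,a$ then yields $\alpha_{Nm}(X)\ge m(N-1)+m\,a$ immediately. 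I would pass freely among these three formulations.

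\emph{The argument.} Fix a form $F$ of degree $a$ with $X\subseteq V(F)=:V$, chosen generic inside $(I_X)_a$, and cut $V$ down by a generic flag of hyperplanes to a reduced curve $C$ through the points $p_i$; pass to its normalization $\nu\colon\widetilde C\to C$ in order to control intersection multiplicities. Given $G\in\bigl(I^{(Nm)}\bigr)_d$ with $d=\alpha_{Nm}(X)$, the bare B\'ezout bound on $\widetilde C$ gives $\deg\bigl(\operatorname{div}(\nu^*G)\bigr)\le a\,d$, while $\operatorname{ord}_{p_i}(\nu^*G)\ge Nm$ for every $i$, so that $d\ge nNm/a$ --- the Waldschmidt--Skoda bound, which falls short of the target by precisely the quantity we need. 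To recover the extra $N-1$ I would run the differentiation argument of Chudnovsky, in the several-variable form of Esnault and Viehweg, but pushed beyond the presently available partial bounds all the way to $\tfrac{a+N-1}{N}$: restrict $G$ \emph{together with all of its partial derivatives} to a decreasing flag of generic sections $V=V^{(0)}\supseteq V^{(1)}\supseteq\cdots\supseteq V^{(N-1)}$, using that in characteristic zero the $j$-th order partials of $G$ still lie in $I_X^{(Nm-j)}$ (Zariski--Nagata), and combine the various restrictions by a several-variable Dyson--Bombieri inequality that bounds the dimension of the space of forms of degree $\le d$ vanishing to order $\ge Nm$ along $X$. Executed with the sharp index constant and no error term, this should upgrade the count to $\alpha_{Nm}(X)\ge m(N-1)+m\,a$, which is the displayed inequality.

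\emph{Main obstacle.} The whole difficulty is concentrated in this boosting step. A minimal-degree container $V(F)$ need be neither smooth, nor irreducible, nor reduced, and the points of $X$ may lie in its singular locus, so both the intersection input on $\widetilde C$ and the Dyson-type dimension bound must be made uniform over the worst-case geometry of $V$; the delicate point is to squeeze out the \emph{exact} constant $N-1$ --- not the smaller constants produced by the existing estimates --- from the several-variable Dyson lemma against a completely arbitrary configuration. My plan is therefore to establish the reduction and the B\'ezout estimate in full generality, isolate the needed Dyson-type statement as a clean stand-alone inequality, and then devote the remaining work to proving that inequality uniformly in the geometry of the container; gaining extra leverage on $X$ --- for instance through a well-chosen specialization --- is the natural route to that last step, and is the direction in which I would expect the argument to be completed.
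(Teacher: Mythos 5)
The statement you are trying to prove is stated in the paper as a \emph{conjecture}: the paper never proves Chudnovsky's inequality for an arbitrary finite set of points in $\mathbb{P}_{\mathbb{C}}^N$, and it remains open. What the paper actually establishes are special cases --- very general points and generic points in characteristic zero (by specializing to star configurations, for which the Harbourne--Huneke containment is known, and proving semicontinuity of $\alpha(H(\underline{\lambda})^{(m)})$ under specialization, Theorem~\ref{Vt2}, together with the reduction to binomial numbers of points, Proposition~\ref{binom2}), points lying on a quadric, and large symbolic powers of arbitrary homogeneous ideals. So there is no ``paper proof'' of the full statement for your argument to be measured against; any complete proof would be new.

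Measured on its own terms, your proposal has a genuine gap, and it sits exactly where you place it: the ``boosting step.'' The B\'ezout/restriction argument you describe gives the Waldschmidt--Skoda bound $\alpha_{Nm}(X)\geq nNm/a$ (or, via the Ein--Lazarsfeld--Smith containment $I^{(Nm)}\subseteq I^m$, the bound $\alpha(I^{(m)})/m\geq \alpha(I)/N$), and the differentiation/flag technique of Chudnovsky and Esnault--Viehweg is known to yield only $\frac{\alpha(I)+1}{N}$ in $\mathbb{P}_{\mathbb{C}}^N$. Upgrading the constant from $1$ to $N-1$ ``with the sharp index constant and no error term'' is not a technical refinement one can defer: for arbitrary configurations it is precisely the content of the conjecture, and no Dyson-type inequality with that constant is available; the worst-case geometry of a minimal-degree container (non-reduced, singular at the points) is exactly what blocks it. Two smaller inaccuracies in your reduction: the Harbourne--Huneke containment $I^{(Nm)}\subseteq \mathfrak m^{m(N-1)}I^m$ \emph{implies} Chudnovsky's inequality (by the degree count you indicate, since $\alpha(I^m)=m\,\alpha(I)$), but it is not known to be \emph{equivalent} to it --- it is a strictly stronger conjecture --- so you cannot ``pass freely'' among the three formulations; and while the reduction to $\gamma(I_X)\geq\frac{\alpha(I_X)+N-1}{N}$ via the Waldschmidt constant is correct (because $\alpha(I^{(m)})/m\geq\gamma(I)$ for all $m$), it does not by itself buy any new leverage. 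If you want a route that actually closes, the specialization idea you mention at the very end is the one the paper executes --- but note that it only yields the conjecture for very general or generic points, not for every finite set, because the semicontinuity argument controls $\alpha$ of symbolic powers only on a countable intersection of dense open subsets of the parameter space.
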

The first improvement towards  Chudnovsky's Conjecture~\ref{CC} was achieved in \cite{EV} by
Esnault and Viehweg, who employed complex projective geometry techniques to show
$\displaystyle\frac{\alpha_m(X)}{m}\geq \frac{\alpha(X)+1}{N}$ for points in $\mathbb{P}_{\mathbb{C}}^N$.
In fact, this inequality follows by a stronger statement, refining previous inequalities from Bombieri, Waldschmidt and Skoda, see \cite{EV}.

From the algebraic point of view,  Chudnovsky's Conjecture~\ref{CC} can be interpreted in terms of symbolic powers via a celebrated theorem of Nagata and Zariski. Let $R=k[x_0,\ldots,x_N]$ be the homogeneous coordinate ring of $\mathbb{P}_{k}^N$ and $I$ a homogeneous ideal in $R$.
We recall that the $m$-th {\em symbolic power} of  $I$ is defined as the ideal $I^{(m)}=\bigcap_pI^mR_p\cap R$, where $p$ runs over all associated prime ideals of $R/I$, and the {\em initial degree} of $I$, $\alpha(I)$, is the least degree of a polynomial in $I$. Nagata and Zariski showed that if $k$ is algebraically closed and $X$ is a finite set of points in $\mathbb{P}_k^N$, then $\alpha_m(X)=\alpha(I_X^{(m)})$, where $I_{X}$ is the ideal consisting of  all polynomials in $R$ that vanish on $X$. Thus  in this setting Chudnovsky's Conjecture~\ref{CC} is equivalent to
$$
\frac{\alpha(I_X^{(m)})}{m}\geq \frac{\alpha(I_X)+N-1}{N} \mbox{\ \ \ for all } m \geq 1.
$$

The limit $\displaystyle\lim\limits_{m\rightarrow \infty}\frac{\alpha(I_X^{(m)})}{m}=\gamma(I_X)$, called the {\em Waldschmidt constant} of $I_X$, exists and is an ``inf'' \cite{BH}. Thus another equivalent formulation of Chudnovsky's Conjecture~\ref{CC} is $$\gamma(I_X)\geq \frac{\alpha(I_X)+N-1}{N}.$$
We remark here that there is a tight connection between the Waldschmidt constant (especially for general points) and an instance of the  multipoint Seshadri constant \cite[Section 8]{Sesh}.

We now state a generalized  version of Chudnovsky's Conjecture~\ref{CC1}. When $k$ is an algebraically closed field then the following conjecture is equivalent to Chudnovsky's Conjecture~\ref{CC1}.

\begin{Conjecture}\label{CC}
If $X$ is a finite set of points in $\mathbb{P}_{{k}}^N$, where $k$ is any field, then for all $m\geq 1$
$$
\displaystyle\frac{\alpha(I_X^{(m)})}{m}
\geq \frac{\alpha(I_X)+N-1}{N}.
$$
\end{Conjecture}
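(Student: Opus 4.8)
The plan is to reduce the inequality for an arbitrary finite $X$ to the three cases settled in this paper --- very general (and generic) points in characteristic zero, points lying on a quadric, and the asymptotic regime --- through a chain of reductions ending in a degeneration argument, which I expect to be the true obstacle. First I would reduce to $k=\overline{k}$: for $X\subseteq\mathbb{P}_k^N$, base change along $k\hookrightarrow\overline{k}$ (innocuous in characteristic zero, where $X$ stays reduced). Since $R\to R\otimes_k\overline{k}$ is faithfully flat, the Hilbert functions of $I_X$ and of $I_X^{(m)}$ are preserved, so $\alpha(I_X)$ and $\alpha(I_X^{(m)})$ are unchanged; moreover $V(I_X\otimes_k\overline{k})$ is a finite set of closed points with ideal $I_X\otimes_k\overline{k}$, and, $X$ being zero-dimensional and reduced, $(I_X\otimes_k\overline{k})^{(m)}=I_X^{(m)}\otimes_k\overline{k}$. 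Over $\overline{k}$, Nagata--Zariski turns the assertion into the classical $\alpha_m(X)\ge\tfrac{m}{N}(\alpha(X)+N-1)$.

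I would then stratify by $d=\alpha(I_X)$. Since a homogeneous form vanishing to order $m$ at a point has degree $\ge m$, one always has $\alpha(I_X^{(m)})\ge m=\tfrac{m}{N}(1+N-1)$, which is the conjecture for $d=1$ (points on a hyperplane), with equality realized by a power of the defining linear form. If $d=2$ then $X$ lies on a quadric, and this is the case proved above for arbitrary $k$. There remains $d\ge3$, where I would pass to the equivalent form $\gamma(I_X)\ge\tfrac{\alpha(I_X)+N-1}{N}$ of the conjecture; since $\gamma(I_X)=\inf_m\tfrac{\alpha(I_X^{(m)})}{m}$ one has $\tfrac{\alpha(I_X^{(m)})}{m}\ge\gamma(I_X)$ for every $m$, so it suffices to bound $\gamma(I_X)$ from below and one may reason asymptotically in $m$.

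The heart of the plan is then a degeneration inside a fixed hypersurface. Fix a form $F$ of degree $d$ with $X\subseteq V(F)$ and regard $X$ as a member of the family of $n$-point subschemes of $V(F)$. For $f\in R_j$ the condition ``$\mathrm{ord}_p(f)\ge m$ for every $p\in X'$'' is linear in the coefficients of $f$, so $\dim_k(I_{X'}^{(m)})_j$ is upper semicontinuous along any subfamily; hence if $X_0\subseteq V(F)$ is a degeneration of $X$ then $\alpha(I_X^{(m)})\ge\alpha(I_{X_0}^{(m)})$ for all $m$, and therefore $\gamma(I_X)\ge\gamma(I_{X_0})$. It would thus be enough, for each $n$ and each $d\ge3$, to produce such an $X_0\subseteq V(F)$ with $\alpha(I_{X_0})$ still equal to $d$ and for which $\gamma(I_{X_0})\ge\tfrac{d+N-1}{N}$ is already available --- most plausibly a configuration in sufficiently general position on the degeneration of $V(F)$ to a union of $d$ hyperplanes $V(\ell_1)\cup\cdots\cup V(\ell_d)$, so that the very general points case proved above furnishes the base of an induction on $N$ run by the successive general-hyperplane-section and differentiation technique of Esnault--Viehweg \cite{EV}, whose $N-1$ restrictions are precisely what upgrades their constant $\tfrac{\alpha+1}{N}$ to the conjectural $\tfrac{\alpha+N-1}{N}$.

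The main obstacle is exactly the construction of such an $X_0$: the degeneration must \emph{preserve the initial degree} $d$ while simultaneously landing in a configuration whose Waldschmidt constant is already known to be at least $\tfrac{d+N-1}{N}$. The two evident degenerations each fail one half of this --- a Gr\"obner degeneration to a monomial ideal keeps $\alpha(I_X)$ but destroys the symbolic-power behaviour, while a degeneration to very general points keeps the symbolic structure but \emph{raises} $\alpha$, very general points being the generic rather than a special configuration. Equivalently, what is needed is a configuration-independent lower bound $\gamma(I_X)\ge\tfrac{d+N-1}{N}$ valid for every $X$ with $\alpha(I_X)=d\ge3$, and constructing flat families of point configurations with constant initial degree and controlled special fibres is at present an unsolved problem; in positive characteristic it is aggravated by the absence of the very general points base case. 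This is why the argument, as it stands, establishes the conjecture only for $X$ on a quadric, for very general and generic points in characteristic zero, and --- in the asymptotic sense discussed above --- for large symbolic powers.
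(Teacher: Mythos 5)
There is a genuine gap here, but it is one you yourself flag, and it is worth being precise about its status: the statement you were asked to prove is Conjecture~\ref{CC}, which this paper does \emph{not} prove and which remains open. The paper establishes only the special cases listed in its introduction --- very general points and generic points in characteristic zero (Theorems~\ref{main2} and~\ref{generic2}), any points on a quadric (Proposition~\ref{2N2}), and the asymptotic statement for large symbolic powers (Theorem~\ref{HomogeneousIdeal3}) --- and your proposal, honestly, reaches exactly the same territory and no further. The missing step you identify is indeed the whole content of the conjecture: you need, for an arbitrary configuration $X$ with $\alpha(I_X)=d\geq 3$, a degeneration $X_0$ that simultaneously keeps $\alpha(I_{X_0})=d$ and has $\gamma(I_{X_0})\geq\frac{d+N-1}{N}$ already known. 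No such $X_0$ is available, and the tension you describe is real: semicontinuity gives $\gamma(I_X)\geq\gamma(I_{X_0})$ only when $X_0$ is a \emph{specialization} of $X$, but specializations tend to lower both $\alpha$ and the Waldschmidt constant, while the configurations with known good Waldschmidt constants (very general points, star configurations) are generic-type rather than special, so one cannot degenerate an arbitrary $X$ onto them. Note that the paper's actual mechanism runs in the opposite direction from your plan: Theorem~\ref{Vt2} shows the \emph{generic} configuration maximizes $\alpha(H^{(m)})$ among all configurations of $n$ points, and the generic case is then settled by specializing to a star configuration and invoking the Harbourne--Huneke result for star configurations (not Esnault--Viehweg); the very general case follows by a countable intersection of the open sets from Theorem~\ref{Vt2}, with star configurations again witnessing non-emptiness. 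This comparison-to-the-generic-point argument says nothing about a fixed special $X$, which is precisely why the general conjecture survives.

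Two smaller points. First, your reduction to $k=\overline{k}$ is not innocuous outside characteristic zero: for a closed point with inseparable residue field extension, $I_X\otimes_k\overline{k}$ need not be radical and the identification $(I_X\otimes_k\overline{k})^{(m)}=I_X^{(m)}\otimes_k\overline{k}$ requires care; the paper avoids this by formulating Conjecture~\ref{CC} over an arbitrary field directly and noting equivalence with Conjecture~\ref{CC1} only when $k$ is algebraically closed. Second, your cases $d=1$ and $d=2$ are handled correctly and match Proposition~\ref{2N2} (whose proof is even simpler than yours suggests: it picks $N+1$ points of $X$ in general position and quotes Dumnicki's result). So the proposal is a reasonable research plan with an accurate self-assessment, but it is not, and could not be, a proof of the stated conjecture.
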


In 2001, Ein, Lazarsfeld, and Smith proved a containment between ordinary powers and symbolic powers of homogeneous ideals in polynomial rings over the field of complex numbers. More precisely, for any homogeneous ideal $I$ in $R=\mathbb{C}[x_0, \ldots, x_N]$, they proved that $I^{(Nm)} \subseteq I^m$ \cite{ELS}. Their result was soon generalized over any field by Hochster and Huneke using characteristic $p$ techniques~\cite{HoHu}.
Using this result, Harbourne and Huneke  observed that   the Waldschmidt-Skoda inequality $\displaystyle\frac{\alpha(I^{(m)})}{m}\geq \frac{\alpha(I)}{N}$ actually holds for every homogeneous ideal $I$ in $R$ \cite{HaHu}. In the same article, Harbourne and Huneke posed the following conjecture:
\begin{Conjecture}[Harbourne-Huneke \cite{HaHu}]\label{HH}
If $X$ is a finite set of points in $\mathbb{P}_k^N$, then for all $m\geq1$
$$
I_X^{(Nm)}\subseteq M^{m(N-1)}I_X^m,
$$
where $M=(x_0, \ldots, x_N)$ is the  homogeneous maximal ideal of $R=k[x_0, \ldots, x_N]$.
\end{Conjecture}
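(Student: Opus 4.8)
\medskip

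The plan is to work with the ideal-theoretic form of the statement, viewing Conjecture~\ref{HH} as the sharpening of Chudnovsky's Conjecture~\ref{CC} that records membership rather than merely initial degrees: applying $\alpha$ to $I_X^{(Nm)}\subseteq M^{m(N-1)}I_X^m$ gives $\alpha(I_X^{(Nm)})\geq m(N-1)+\alpha(I_X^m)=m(N-1)+m\,\alpha(I_X)$, which is Conjecture~\ref{CC} for symbolic indices divisible by $N$ and hence, the Waldschmidt constant being an infimum, Conjecture~\ref{CC} in full. So the goal is, given a homogeneous $F\in I_X^{(Nm)}$, to write $F=\sum_j\mu_j g_j$ with each $\mu_j$ a monomial of degree $m(N-1)$ and each $g_j\in I_X^m$. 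I would obtain this through a differential reduction followed by an ``upgrade'' from symbolic to ordinary powers.

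\medskip

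\emph{Step 1: differential reduction (characteristic zero).} First I would prove the unconditional containment $I_X^{(Nm)}\subseteq M^{m(N-1)}I_X^{(m)}$. Let $0\neq F\in I_X^{(Nm)}$ be homogeneous of degree $d$; since $F$ vanishes to order at least $Nm$ at each point, $d\geq Nm$, in particular $d\geq a:=m(N-1)$. The higher Euler identity
$$
\sum_{|\beta|=a}\frac{a!}{\beta!}\,x^{\beta}\,\partial^{\beta}F\;=\;d(d-1)\cdots(d-a+1)\,F
$$
has nonzero scalar on the right, so dividing through presents $F$ as a $k$-linear combination of the products $x^{\beta}\partial^{\beta}F$ with $x^{\beta}\in M^{a}=M^{m(N-1)}$. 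A differential operator of order $a$ sends $I_X^{(s)}$ into $I_X^{(s-a)}$, so each $\partial^{\beta}F$ lies in $I_X^{(Nm-a)}=I_X^{(m)}$, and the displayed containment follows. Taking initial degrees already yields a Chudnovsky-type inequality $\alpha(I_X^{(Nm)})\geq m(N-1)+\alpha(I_X^{(m)})$, but with a symbolic power on the right.

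\medskip

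\emph{Step 2: from $I_X^{(m)}$ to $I_X^m$.} It remains to replace the $I_X^{(m)}$ of Step~1 by $I_X^m$. This is the heart of the matter and is not formal: $I_X^{(m)}\not\subseteq I_X^m$ in general, and $M$ being generated by a regular sequence one has $(M^{a}I_X^m:M^{a})=I_X^m$, so even $M^{a}I_X^{(m)}\not\subseteq M^{a}I_X^m$. One must exploit that $I_X^{(Nm)}$ sits \emph{strictly} inside $M^{m(N-1)}I_X^{(m)}$. Two ingredients feed in here: the Ein--Lazarsfeld--Smith/Hochster--Huneke containment $I_X^{(Nm)}\subseteq I_X^m$ (\cite{ELS},\cite{HoHu}), which already places $F$ in $I_X^m$, and a re-application of Step~1 to the partials $\partial^{\beta}F$; running these together by strong induction on $m$, one would trade symbolic vanishing for ordinary vanishing a bounded number of orders at a time while keeping the accumulated power of $M$ no smaller than $m(N-1)$. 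Concretely this is the assertion that, in each degree $e$, the inclusion $(I_X^{(Nm)})_e\subseteq(I_X^m)_e$ has image inside the subspace $M_{m(N-1)}\cdot(I_X^m)_{e-m(N-1)}$.

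\medskip

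I expect Step~2 to be the main obstacle: for an \emph{arbitrary} configuration one has no control over the gap between $I_X^{(m)}$ and $I_X^m$ --- equivalently, over the embedded component of $I_X^m$ at the irrelevant ideal $M$ --- so the degree-wise inclusion above is inaccessible in general. I would therefore first settle the conjecture where this gap is understood. For very general or generic points over an algebraically closed field of characteristic zero, one specializes from the generic $n$-point configuration, in which all relevant Hilbert functions take their expected values and the needed inclusion of graded pieces reduces to a statement about generic secant behaviour provable in the ranges where SHGH-type results are known. For points on a quadric $Q$ (with no hypothesis on $k$), one parametrizes $Q$ explicitly and rewrites the membership as a monomial/combinatorial containment that is checked directly. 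Extending the argument to every finite set of points would require a genuinely new handle on $I_X^m$ near $M$, and that is exactly what leaves Conjecture~\ref{HH} open in general.
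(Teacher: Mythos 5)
You were asked about Conjecture~\ref{HH} itself, and the decisive point is that neither your proposal nor the paper proves it: it is an open conjecture, and your own write-up concedes as much. Your Step~1 is a correct (and essentially known) observation in characteristic zero: the iterated Euler identity plus the fact that an order-$a$ differential operator maps $I_X^{(s)}$ into $I_X^{(s-a)}$ gives $I_X^{(Nm)}\subseteq M^{m(N-1)}I_X^{(m)}$. But the conjecture is stated over an arbitrary field, where the factorials and the differential characterization of symbolic powers already fail, and Step~2 --- replacing $I_X^{(m)}$ by $I_X^m$ while keeping the factor $M^{m(N-1)}$ --- is precisely the open content of the conjecture. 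What you have is a reduction followed by an acknowledgment that the remaining degreewise inclusion is inaccessible; that is not a proof. (A side remark: your claim that $(M^{a}I_X^m:M^{a})=I_X^m$ because $M$ is generated by a regular sequence is false in general --- for $J=(x^2,y^2)\subseteq k[x,y]$ and $a=1$ one has $xy\in (MJ:M)\setminus J$ --- though nothing essential in your sketch hinges on it.)

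For comparison, the paper never claims a proof of Conjecture~\ref{HH} in general. It proves Chudnovsky's Conjecture~\ref{CC} for generic and very general points (Theorems~\ref{generic2} and~\ref{main2}) by specializing the generic configuration to star configurations via Theorem~\ref{Vt2} and Lemma~\ref{open}, and it obtains the Harbourne--Huneke containment only for $\binom{\beta+N-1}{N}$ very general or generic points (Corollary~\ref{HH2}), by feeding the resulting bound into \cite[Proposition~3.3 and Remark~3.4]{HaHu}, which converts an initial-degree estimate (together with the regularity control available for such configurations) into the ideal containment; this is a different mechanism from the degreewise inclusion you propose in Step~2. Likewise, Proposition~\ref{2N2} for points on a quadric establishes only Chudnovsky's inequality, not the containment, and does so by passing to $N+1$ points in general position and citing \cite{D}, not by parametrizing the quadric. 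So your suggested routes (SHGH-type secant statements for very general points, explicit monomial computations on a quadric) do not correspond to anything carried out in the paper, and the statement you set out to prove remains open.
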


Conjecture~\ref{HH} strives to provide a  structural reason behind  Chudnovsky's Conjecture~\ref{CC}: if it holds, then it would imply  Chudnovsky's Conjecture~\ref{CC} in a similar way as to how the Ein-Lazarsfeld-Smith and Hochster-Huneke containment implies the Waldschmidt-Skoda inequality \cite{HaHu}.
These results have since raised new interest in  Chudnovsky's Conjecture~\ref{CC}.

Harbourne and Huneke proved their conjecture for general points in $\mathbb{P}_k^2$ and when the points form a star configuration in $\PN$.
In 2011, Dumnicki proved  the Harbourne-Huneke Conjecture~\ref{HH} for general points in $\mathbb{P}_k^3$ and at most $N+1$ points in general position in $\mathbb{P}_k^N$ for $N\geq 2$ \cite{D}.  In summary,  Chudnovsky's Conjecture~\ref{CC}  is  known in the following cases:
\begin{itemize}
\item any finite set of points in $\mathbb{P}_k^2$ \cite{Ch}, \cite{HaHu};
\item any finite set of general points in $\mathbb{P}_k^3$, where $k$ is a field of characteristic 0 \cite{D};
\item any set of at most $N+1$ points in general position in $\mathbb{P}_k^N$ \cite{D};
\item any set of a binomial number of points in $\mathbb{P}_{k}^N$ forming a star configuration \cite{De}, \cite{BH}.
\end{itemize}

In the present paper, we prove that  Chudnovsky's Conjecture~\ref{CC} holds for
\begin{itemize}
\item any finite set of very general points in $\mathbb{P}_k^N$, where $k$ is an algebraically closed field of characteristic 0  (Theorem~\ref{main2});
\item any finite set of generic points in $\mathbb{P}_{k (\underline{z})}^N$, where $k$ is an algebraically closed field of characteristic 0 (Theorem~\ref{generic2});
\item any finite set of points in $\mathbb{P}_k^N$ lying on a quadric, without any assumptions on $k$ (Proposition~\ref{2N2}).
\end{itemize}
 As a corollary, we  obtain that the Harbourne-Huneke Conjecture~\ref{HH} holds for sets of a binomial number of very general points in $\mathbb{P}_{k}^N$ (Corollary~\ref{HH2}). This result also yields a new lower bound for the multipoint Seshadri constant of very general points in $\mathbb{P}_k^N$ (Corollary~\ref{Seshadri2}).

In the final section of the paper, we prove that for any homogeneous ideal $I$ in the homogeneous coordinate ring $R=k[x_0, \ldots, x_N]$,  Chudnovsky's Conjecture~\ref{CC} holds for sufficiently large symbolic powers $I^{(t)}$, $t\gg 0$ (Theorem~\ref{HomogeneousIdeal3}). In the case of ideals of  finite  sets of points in $\mathbb{P}_{\mathbb{C}}^N$, we prove a uniform bound, namely that if $t\geq N-1$, then $I^{(t)}$ satisfies  Chudnovsky's Conjecture~\ref{CC} (Proposition~\ref{HomogeneousIdeal11}).

Very recently, Dumnicki and Tutaj-Gasi\'{n}ska proved the Harbourne-Huneke Conjecture~\ref{HH} for at least $2^N$ number of very general points in $\mathbb{P}_k^N$. As a corollary, they obtain Chudnovsky's Conjecture~\ref{CC} for at least $2^N$ number of very general points in $\mathbb{P}_k^N$ \cite{D1}. These results are obtained independently from ours and with different methods.

\section{Generic and very general points in $\mathbb{P}_k^N$}

 We begin by discussing our general setting.

\begin{Set-up}\label{setup2}
Let $R=k[x_0,\ldots, x_N]$ be the homogeneous coordinate ring of $\mathbb{P}_k^N$, where $k$ is an algebraically closed field. Let  $n$ be a positive integer and let $S=k(\underline{z}) [\underline{x}]$, where $k\subseteq k(\underline{z})$ is a purely transcendental  extension of fields obtained by adjoining  $n(N+1)$   variables $\underline{z}=(z_{ij})$, $1\leq i\leq n, \,0\leq j\leq N$.
A set of $n$ generic points $P_1,\ldots,P_n$ consists of points
$P_i=[z_{i0}:z_{i1}:\ldots:z_{iN}]\in \mathbb{P}^N_{k(\underline{z})}$. We denote the defining ideal  of $n$ generic points as
$$
H=\bigcap \limits_{i=1}^nI(P_i),
$$
where $I(P_i)$ is the ideal defining the point $P_i$.

For any nonzero vector $\underline{\lambda}=(\lambda_{ij}) \in \mathbb{A}_k^{n(N+1)}$, where $1\leq i\leq n, 0\leq j\leq N$,  we define the set of points $\{p_1,\ldots,p_n\}\subseteq \mathbb P_k^N$ as the points  $p_i=P_i(\underline{\lambda})=[\lambda_{i0}: \lambda_{i1}: \ldots: \lambda_{iN}]\in \mathbb P_k^N$.
For $1 \leq i \leq n$, let $I(p_i)$ be the ideal of $R$ defining the point $p_i$ and define
$$
H ({\ul\lambda})=\bigcap \limits_{i=1}^nI(p_i).
$$
\end{Set-up}

For any  ideal $J$ in $S$,  recall that Krull \cite{Kr1, Kr2} defined the specialization $J_{\underline{\lambda}}$ with respect to the substitution $\underline{z}\rightarrow \underline{\lambda}$ as follows:
$$
J_{\underline{\lambda}}=\{f(\underline{\lambda}, \underline{x})\,|\, f(\underline{z}, \underline{x})\in J\cap k[\underline{z}, \underline{x}]\}.
$$
In general, one has that $H_{\underline{\lambda}}\subseteq H({\underline{\lambda}})$, where $H$ and $H(\underline{\lambda})$ are defined as in Set-up~\ref{setup2} and
$H_{\underline{\lambda}}$ is the specialization with respect to the substitution $\underline{z}\rightarrow \underline{\lambda}$ defined by Krull.
Notice that equality holds if $\underline{\lambda}$ is in a dense Zariski-open  subset  of $\mathbb{A}_k^{n(N+1)}$  \cite{NT}.

Recall the collection of all sets  consisting of $n$  points (not necessarily distinct) in $\mathbb{P}_k^N$ is parameterized by  $G(1,n,N+1)$,  the {\em Chow variety} of algebraic $0$-cycles of degree $n$ in $\mathbb P_k^N$. It is well-known that
 $G(1,n,N+1)$ is isomorphic to  the symmetric product ${\rm Sym}^n(\mathbb P_k^N)$, see for instance \cite{GKZ}.   One says that a property $\mathcal P$ holds for $n$ {\it  general points} in $\mathbb{P}_{k}^N$ if there is a dense Zariski-open subset $W$ of $G(1,n,N+1)$ such that $\mathcal P$ holds for every set $X=\{p_1,\ldots,p_n\}$ of $n$ points with $p_1+\ldots+p_n\in W$. Similarly, one says that a property $\mathcal P$ holds for $n$ {\it very general points} in $\mathbb{P}_{k}^N$ if $\mathcal P$ holds for every set of $n$ points in a nonempty subset $W$  of $G(1,n,N+1)$ of the form $W=\bigcap \limits_{i=1}^{\infty} U_i$, where the $U_i$ are dense Zariski-open sets (when $k$ is uncountable, then $W$ is actually a dense subset). We conclude this part by recalling the following well-known fact.

 \begin{Remark}\label{Xn2}{\rm
Let $n$ be a positive integer. The collection of all sets  consisting of $n$ distinct points in $\mathbb{P}_k^N$ is parameterized by a dense Zariski-open subset $W(n)$ of $G(1,n,N+1)$.}
\end{Remark}

Unless specified, for the rest of this paper by a ``set of points'' we mean ``a set of simple points'', i.e. points whose defining ideal is radical.

Instead of working directly with the Chow variety, we will work over $\mathbb A_k^{n(N+1)}$ (in order to specialize from the generic situation). We first need to prove that if a property holds on a dense Zariski-open subset of $\mathbb A_k^{n(N+1)}$, then it also holds on a dense Zariski-open subset of the Chow variety. This is precisely the content of our first lemma.

\begin{Lemma}\label{open}
Assume Set-up~\ref{setup2} and let $U\subseteq \mathbb A_k^{n(N+1)}$ be a  dense Zariski-open subset such that a property $\mathcal P$ holds for $H(\underline{\lambda})$ whenever $\ul{\lambda}\in U$. Then property $\mathcal P$ holds for $n$ general points in $\mathbb P_k^N$. Moreover, if a property $\mathcal P$ holds for $H(\underline{\lambda})$ whenever $\ul{\lambda}\in U$, where $U=\bigcap \limits_{i=1}^{\infty} U_i \subseteq \mathbb{A}_k^{n(N+1)}$ is nonempty and each $U_i $ is a dense Zariski-open set, then $\mathcal{P}$ holds for $n$ very general points in $\mathbb{P}_k^N$.
\end{Lemma}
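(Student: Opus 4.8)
\emph{Proof proposal.} The plan is to present the Chow variety $G:=G(1,n,N+1)\cong\mathrm{Sym}^n(\mathbb P_k^N)$ as a quotient of a dense open subset of $\mathbb A_k^{n(N+1)}$ and to transport open subsets across it. Let $\Omega:=\prod_{i=1}^n\bigl(\mathbb A_k^{N+1}\setminus\{0\}\bigr)\subseteq\mathbb A_k^{n(N+1)}$, a dense Zariski-open subset; since only $\underline\lambda\in\Omega$ defines points $P_i(\underline\lambda)$, I may assume $U\subseteq\Omega$ (resp. each $U_i\subseteq\Omega$). I would use the morphism $\tilde\psi\colon\Omega\to(\mathbb P_k^N)^n$, $\underline\lambda\mapsto(P_1(\underline\lambda),\ldots,P_n(\underline\lambda))$, which is a Zariski-locally trivial $\mathbb G_m^n$-bundle — hence flat, surjective, open, with irreducible fibers $\cong\mathbb G_m^n$ — together with the quotient map $\pi\colon(\mathbb P_k^N)^n\to G$ by the $S_n$-action, which is finite, surjective and open (if $V$ is open then so is $\pi^{-1}(\pi(V))=\bigcup_{\sigma\in S_n}\sigma(V)$). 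In particular $G$ is irreducible. The one observation needed is that if $\underline\lambda\in\Omega$ has $n$ distinct associated points forming the set $X$, then $H(\underline\lambda)=\bigcap_i I(P_i(\underline\lambda))=I_X$, so that ``$\mathcal P$ holds for $H(\underline\lambda)$'' says precisely ``$\mathcal P$ holds for $X$''.

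For the first statement, I would take $W:=\pi\bigl(\tilde\psi(U)\setminus\Delta\bigr)$, where $\Delta\subseteq(\mathbb P_k^N)^n$ is the big diagonal. Since $U$ is dense open in $\Omega$ and $\tilde\psi,\pi$ are open, $W$ is a nonempty — hence dense, $G$ being irreducible — open subset of $G$ contained in the locus $W(n)$ of Remark~\ref{Xn2}. If $X=\{p_1,\ldots,p_n\}$ is any set of $n$ distinct points with $p_1+\ldots+p_n\in W$, then by construction some ordering of the $p_i$ lies in $\tilde\psi(U)$, so $H(\underline\lambda)=I_X$ for some $\underline\lambda\in U$; as $\mathcal P$ holds for $H(\underline\lambda)$, it holds for $X$. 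Hence $\mathcal P$ holds for $n$ general points.

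For the ``moreover'' part: when $k$ is countable the assertion is easy, since then, picking $\underline\lambda\in U$ with distinct associated points one gets a set $X$ with property $\mathcal P$, and over a countable field the singleton consisting of the $0$-cycle $p_1+\ldots+p_n$ (where $X=\{p_1,\ldots,p_n\}$) — indeed any nonempty subset of the positive-dimensional irreducible variety $G$ — is a countable intersection of dense Zariski-open sets. So assume $k$ uncountable, and, after replacing $U_i$ by $U_1\cap\ldots\cap U_i$, assume $U_1\supseteq U_2\supseteq\cdots$. Put $V_i:=\pi\bigl(\tilde\psi(U_i)\setminus\Delta\bigr)$, each dense open in $G$ and contained in $W(n)$ by the argument above, and set $W:=\bigcap_i V_i$; this is nonempty because over an uncountable field a countable intersection of dense opens of an irreducible variety is dense. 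It remains to check that $\mathcal P$ holds for every set $X=\{x_1,\ldots,x_n\}$ of $n$ distinct points with $c_X:=x_1+\ldots+x_n\in W$. The fiber $(\pi\circ\tilde\psi)^{-1}(c_X)$ is the disjoint union of the $n!$ irreducible tori $T_\sigma:=\tilde\psi^{-1}\bigl(x_{\sigma(1)},\ldots,x_{\sigma(n)}\bigr)\cong\mathbb G_m^n$, $\sigma\in S_n$. For each $i$, $c_X\in V_i$ forces $T_{\sigma_i}\cap U_i\neq\emptyset$ for some $\sigma_i$; since the $U_i$ decrease, the nonempty sets $A_i:=\{\sigma\in S_n:T_\sigma\cap U_i\neq\emptyset\}$ decrease inside the finite group $S_n$ and so stabilize, say to $A\neq\emptyset$. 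Fixing $\sigma'\in A$, we get $T_{\sigma'}\cap U_i\neq\emptyset$ for all $i$ (for small $i$ because $U_i$ is then larger), and since each $U_i$ is open and $T_{\sigma'}$ is irreducible, $U_i\cap T_{\sigma'}$ is dense open in $T_{\sigma'}$. As $k$ is uncountable, $T_{\sigma'}$ is not covered by the countably many proper closed subsets $T_{\sigma'}\setminus U_i$, so there is $\underline\lambda\in\bigcap_i(U_i\cap T_{\sigma'})=U\cap T_{\sigma'}$. Then $\underline\lambda\in U$ and $H(\underline\lambda)=I_X$, hence $\mathcal P$ holds for $X$. Thus $W$ is a nonempty countable intersection of dense Zariski-open subsets of $G$ on which $\mathcal P$ holds, i.e. $\mathcal P$ holds for $n$ very general points.

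The openness and surjectivity of $\tilde\psi$ and $\pi$ and the identity $H(\underline\lambda)=I_X$ are routine. The genuinely delicate point is the very general case: since $\dim\mathbb A_k^{n(N+1)}=\dim G+n$, there is no reason for a ``very general'' subset of $\mathbb A_k^{n(N+1)}$ to push forward to a ``very general'' subset of $G$ in any naive way — already a codimension-one subvariety of $\mathbb A_k^{n(N+1)}$ can dominate $G$ — so the countable intersection has to be descended one fiber at a time, where the nesting $U_1\supseteq U_2\supseteq\cdots$ and the finiteness of $S_n$ are exactly what make the pigeonhole step go through; uncountability of $k$ then enters only to guarantee that the relevant intersections are nonempty.
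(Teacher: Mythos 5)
Your argument is correct, and for the first (``general points'') claim it is essentially the paper's argument: the paper also passes from $\mathbb A_k^{n(N+1)}$ to $\mathbb P_k^N\times_k\cdots\times_k\mathbb P_k^N$ by the blockwise projection and then to ${\rm Sym}^n(\mathbb P_k^N)\cong G(1,n,N+1)$ by the finite $S_n$-quotient; the only cosmetic difference is that the paper invokes constructibility of images of dominant maps (\cite[II. Ex. 3.19~(b)]{Ha}) to find a dense open inside $\pi(U\setminus C)$, whereas you get openness of the image directly from the $\mathbb G_m^n$-bundle structure and the saturation argument for the quotient, and you also explicitly discard the big diagonal (the paper absorbs this into Remark~\ref{Xn2}). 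The genuine difference is the ``moreover'' part: the paper's written proof stops after producing $W$ and gives no separate argument for very general points, and the obvious plan of applying the first part to each $U_i$ does not literally work for an abstract property $\mathcal P$, because $\mathcal P$ is only assumed on $\bigcap_i U_i$; a cycle lying in every image $V_i$ only provides, for each $i$ separately, some preimage in $U_i$, and these preimages may differ. Your fiberwise descent --- nesting the $U_i$, pigeonholing over the finitely many orderings $T_\sigma$ in the fiber, and using uncountability of $k$ to find a point of the torus in all $U_i\cap T_{\sigma'}$ simultaneously --- is exactly what closes this gap, so on this point your proof is more complete than the paper's; it is worth noting that in the paper's actual application (Theorem~\ref{main2}) the sets $U_s$ are unions of fibers of the parametrization, since membership depends only on the ideal $H(\underline{\lambda})$, so there the subtlety disappears and the paper's implicit argument suffices. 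The one loose end is your countable-field case: nonemptiness of $U=\bigcap_i U_i$ does not by itself guarantee a $\underline{\lambda}\in U$ whose associated points are distinct (over a countable field $U$ could lie entirely over the diagonal, or even outside the locus where all blocks are nonzero); this is easily patched, e.g.\ by observing that if no such $\underline{\lambda}$ exists one may take for $W$ a countable intersection of dense opens of $G(1,n,N+1)$ contained in the complement of the locus $W(n)$ of Remark~\ref{Xn2} (over a countable field the complement of each closed point is a dense open), so the ``very general'' conclusion holds vacuously --- but as written that step of your proof assumes something not granted by the hypotheses.
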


\demo For every $i=1,\ldots,n$, let $
  \xymatrix{
\pi_i:\mathbb A_k^{n(N+1)}   \ar@{-->}[r] & \mathbb P_k^N
  }$
be the rational map defined by projection as follows:
$$
\pi_i\left(\underline{\lambda}\right)=[\lambda_{i0}:\lambda_{i1}:\ldots:\lambda_{iN}],
$$
where $\underline{\lambda}=(\lambda_{ij})\in \mathbb A_k^{n(N+1)}$.
It is clear that $\pi_i$ is defined on the complement of the Zariski-closed proper subset $C_i=\{\underline{\lambda}\in \mathbb A_k^{n(N+1)}\,|\,\lambda_{i0}=\ldots=\lambda_{iN}=0\}$.

Taking products of these rational maps, we obtain the rational map
$$\pi=(\pi_1\times\pi_2\times\cdots\times\pi_n): \xymatrix{\mathbb A_k^{n(N+1)}   \ar@{-->}[r] &  \mathbb P_k^N \times_k \mathbb P_k^N \times_k \cdots \times_k \mathbb P_k^N.
  }$$
The map $\pi$ is  defined on the complement of the closed proper subset $C=\bigcup \limits_{i=1}^n C_i$, where $C_i$ is as above. Note that $U\setminus C$ is still
open in $\mathbb A_k^{n(N+1)}$,  and since $\pi$ is surjective and thus dominant, then $\pi(U\setminus  C)$ contains
a non-empty Zariski-open subset $W^{\prime}\subseteq \mathbb P_k^N \times_k \mathbb P_k^N \times_k \cdots \times_k \mathbb P_k^N$
(see for instance \cite[II. Ex. 3.19~(b)]{Ha}).

Now, since the symmetric group $S_n$ on $n$ elements is finite,  the image $W$ of $W^{\prime}$ in $(\mathbb P_k^N \times_k \mathbb P_k^N \times_k \cdots \times_k \mathbb P_k^N)/S_n\cong {\rm Sym}^n(\mathbb P_k^N)\cong G(1,n,N+1)$ contains a non-empty Zariski-open subset of $G(1,n,N+1)$.
\QED

\bigskip

Let $H$ be as in  Set-up~\ref{setup2}. We now prove that the initial degree of any symbolic power of $H$ is no smaller than the initial degree of any ideal of a set with the same number of points. Equivalently, if $I$ is the defining ideal of a set of $n$ points in $\PN$, then $\alpha(H^{(m)})\geq \alpha(I^{(m)})$ for all $m\geq 1$.

\begin{Theorem}\label{Vt2}
Let  $m\geq 1$. Assume Set-up~\ref{setup2} and that $k$ has characteristic $0$. Then $\alpha(H^{(m)})\geq \alpha(I_X^{(m)})$, for every set $X$ of $n$ distinct points in $\PN$. Moreover, for every $m\geq 1$, there is a dense Zariski-open subset $U_m\subseteq\mathbb{A}_k^{n(N+1)}$ for which equality holds.
\end{Theorem}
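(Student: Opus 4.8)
The plan is to realize the ideal $H$ of $n$ generic points as a "most degenerate" member, in the sense of initial degrees, of the family $\{H(\underline\lambda)\}$ parametrized by $\mathbb{A}_k^{n(N+1)}$, and then compare with an arbitrary set $X$ of $n$ distinct points via specialization. The key tool is Krull's specialization: for every $m\geq 1$ we always have the inclusion $H_{\underline\lambda}\subseteq H(\underline\lambda)$ of ideals in $R$, and moreover specialization is compatible with symbolic powers in a generic sense. I would first record the semicontinuity statement I need: for each $m$ and each degree $d$, the function $\underline\lambda\mapsto \dim_k\bigl(H(\underline\lambda)^{(m)}\bigr)_d$ is lower semicontinuous on a suitable dense open set (where $H_{\underline\lambda}=H(\underline\lambda)$ and the specialization behaves well), so that the generic value is the minimum; equivalently, $\alpha(H^{(m)})\geq \alpha(H(\underline\lambda)^{(m)})$ for $\underline\lambda$ in a dense open set, with equality on a dense open $U_m$. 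The standard reference point here is that the Hilbert function of $H(\underline\lambda)^{(m)}$ is an upper-semicontinuous function of $\underline\lambda$ (ranks of the natural matrices whose entries are polynomials in $\underline\lambda$ can only drop on a closed set), hence the vanishing locus $(H(\underline\lambda)^{(m)})_d=0$ is closed, so the generic point degree $\alpha$ is largest at the generic fibre.

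The second ingredient handles an \emph{arbitrary} set $X=\{q_1,\dots,q_n\}$ of $n$ distinct points, not necessarily of the form $p_i=P_i(\underline\lambda)$ in an obvious way — but of course it is: writing $q_i=[\mu_{i0}:\cdots:\mu_{iN}]$ and setting $\underline\mu=(\mu_{ij})\in\mathbb{A}_k^{n(N+1)}$, we have $X$ realized as $H(\underline\mu)$, with $\underline\mu$ possibly in the "bad" locus. So I need the inequality $\alpha(H^{(m)})\geq \alpha(H(\underline\mu)^{(m)})$ for \emph{every} $\underline\mu$ (with $H(\underline\mu)$ radical, i.e.\ the points distinct), not just generic ones. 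This is where Krull's specialization is essential rather than mere semicontinuity: by \cite{Kr1,Kr2} and the results cited after Set-up~\ref{setup2}, one has $(H^{(m)})_{\underline\mu}\subseteq H(\underline\mu)^{(m)}$ — the specialization of the $m$-th symbolic power is contained in the $m$-th symbolic power of the specialization when the specialized points are distinct — and the specialization map $k[\underline z,\underline x]_d\to k[\underline x]_d$, $f(\underline z,\underline x)\mapsto f(\underline\mu,\underline x)$, is degree-preserving and cannot raise the initial degree: if $f\in H^{(m)}$ has degree $\alpha(H^{(m)})$ then, after clearing denominators in $\underline z$, $f\in H^{(m)}\cap k[\underline z,\underline x]$ and $f(\underline\mu,\underline x)\in (H^{(m)})_{\underline\mu}\subseteq H(\underline\mu)^{(m)}$; one only needs to check $f(\underline\mu,\underline x)\neq 0$, which holds after rescaling the $\underline z$-coordinates of $f$ (a nonzero polynomial can be made to not vanish at a prescribed point by a generic scalar), giving $\alpha(H(\underline\mu)^{(m)})\leq \deg f=\alpha(H^{(m)})$.

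So the steps, in order, are: (1) set up Krull specialization and recall $H_{\underline\lambda}\subseteq H(\underline\lambda)$ and the compatibility $(H^{(m)})_{\underline\lambda}\subseteq H(\underline\lambda)^{(m)}$ for $\underline\lambda$ with distinct coordinates; (2) deduce $\alpha(H^{(m)})\geq\alpha(H(\underline\lambda)^{(m)})$ for every such $\underline\lambda$ by the degree-preserving specialization of a minimal-degree element of $H^{(m)}$, handling the "$f(\underline\lambda,\underline x)\neq 0$" point by a rescaling argument; since every set $X$ of $n$ distinct points arises as some $H(\underline\lambda)$, this gives the first assertion; (3) for the "moreover", invoke upper-semicontinuity of the Hilbert function of $H(\underline\lambda)^{(m)}$ in $\underline\lambda$ to get a dense open $U_m$ on which $\alpha(H(\underline\lambda)^{(m)})$ is maximal, hence equal to $\alpha(H^{(m)})$ by step (2); one should intersect $U_m$ with the open set $W(n)$ of Remark~\ref{Xn2} and with the locus where $H_{\underline\lambda}=H(\underline\lambda)$ (dense open by \cite{NT}). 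I expect the main obstacle to be step (2), specifically justifying the symbolic-power compatibility $(H^{(m)})_{\underline\lambda}\subseteq H(\underline\lambda)^{(m)}$ at \emph{every} point with distinct coordinates (not just generically): this requires care with how Krull specialization interacts with intersection and primary decomposition, and is presumably the reason characteristic $0$ is assumed (to control associated primes / avoid inseparability phenomena when specializing). The rescaling argument ensuring nonvanishing of $f(\underline\lambda,\underline x)$ is routine but must be stated, since without it the specialization could kill the minimal-degree element and the inequality would be vacuous.
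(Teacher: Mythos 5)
Your step (2) contains the genuine gap. To get the inequality $\alpha(H^{(m)})\geq\alpha(H(\underline{\mu})^{(m)})$ for an \emph{arbitrary} $\underline{\mu}$ you specialize a fixed minimal-degree element $f\in H^{(m)}\cap k[\underline{z},\underline{x}]$ and claim that $f(\underline{\mu},\underline{x})\neq 0$ can be arranged ``by rescaling the $\underline{z}$-coordinates.'' Rescaling only replaces $\underline{\mu}$ by $(c_1\underline{\mu_1},\ldots,c_n\underline{\mu_n})$, i.e.\ it moves the evaluation point inside an $n$-dimensional subvariety of $\mathbb{A}_k^{n(N+1)}$ (the coordinate representatives of the same projective points), and a nonzero polynomial $f(\underline{z},\underline{x})$ can perfectly well vanish identically on that locus: indeed $H^{(m)}\cap k[\underline{z},\underline{x}]$ is stable under multiplication by any $g(\underline{z})\in k[\underline{z}]$ without changing the $\underline{x}$-degree, so nothing forces a minimal-degree element to survive specialization at a prescribed special $\underline{\mu}$. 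So the nonvanishing is exactly the point at which a pointwise specialization argument breaks, and the generic nonvanishing statement (which is what Krull/Nhi--Trung give, and what the paper uses) does not extend to every $\underline{\mu}$. Likewise, the containment $(H^{(m)})_{\underline{\mu}}\subseteq H(\underline{\mu})^{(m)}$ at \emph{every} $\underline{\mu}$ with distinct points is not in the cited specialization results, which are generic statements; you yourself flag this as ``the main obstacle'' but do not supply an argument (in characteristic $0$ it can be rescued by the Zariski--Nagata derivative criterion, since the order-$\leq m-1$ derivative conditions are polynomial identities in $\underline{z}$ and hence persist under every substitution, but this must be said).

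The paper's proof avoids both problems and, notably, uses precisely the semicontinuity observation you relegate to your step (3), but applies it to the arbitrary-$X$ assertion. In characteristic $0$, membership of a degree-$t$ form in $H(\underline{\lambda})^{(m)}$ is the vanishing of all partials of order $\leq m-1$ at the points, i.e.\ a linear system whose matrix $\mathbb{B}_{m,t}$ has entries polynomial in $\underline{\lambda}$; hence $V_t=\{\underline{\lambda}:\alpha(H(\underline{\lambda})^{(m)})\leq t\}$ is Zariski-closed (a rank condition). Specializing a minimal-degree element of $H^{(m)}$ \emph{generically} (where the Krull/Nhi--Trung results do apply and the specialized element is nonzero on a dense open set) shows $V_{t_0}$, $t_0=\alpha(H^{(m)})$, contains a dense open subset; since $\mathbb{A}_k^{n(N+1)}$ is irreducible, $V_{t_0}=\mathbb{A}_k^{n(N+1)}$, which gives the inequality for every set of $n$ distinct points without ever specializing at a bad $\underline{\mu}$. (Also note that your parenthetical ``the vanishing locus $(H(\underline{\lambda})^{(m)})_d=0$ is closed'' is backwards: the rank of $\mathbb{B}_{m,d}$ drops on a closed set, so the locus where $(H(\underline{\lambda})^{(m)})_d\neq 0$ is closed and the vanishing locus is open; and the characteristic-$0$ hypothesis enters through the derivative criterion for symbolic powers, not through associated primes of the specialization.) Your step (3) for the dense open set of equality is essentially the paper's argument and is fine once the first part is established correctly.
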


\demo
  Let $t \geq 0$. We define $
V_t=\{\underline{\lambda}=(\lambda_{ij})\in \mathbb A_k^{n(N+1)} \mid \alpha(H({\underline{\lambda}})^{(m)}) \leq t\}.
$
We first prove that  $V_t$ is a closed subset of $\mathbb A_k^{n(N+1)}$.
Indeed, notice that
$$
V_t=\{\underline{\lambda}=(\lambda_{ij}) \in \mathbb A_k^{n(N+1)} \mid \mbox{ there exists } 0\neq f \in H(\underline{\lambda})^{(m)} \mbox{ of degree } t\}.
$$
Let $f \in R$ be a homogeneous polynomial with $\deg f=t$ and write $f=\sum \limits_{|\underline{\alpha}|=t} C_{\ul{\alpha}} \ul{x}^{\ul{\alpha}}$.
Since $k$ is algebraically closed of characteristic 0, the statement $f\in H({\underline{\lambda}})^{(m)}$ is
equivalent to $\partial_{\ul{\beta}}f(p_i)=0$ for all $\ul{\beta}$ with $|\ul{\beta}|\leq m-1$ and  all points $p_1,\ldots,p_n$.
Since $P_i=[\underline{z_i}]=[z_{i0}:z_{i1}:\ldots:z_{iN}]$ and $p_i=[\underline{\lambda_i}]=[\lambda_{i0}:\lambda_{i1}:\ldots:\lambda_{iN}]$, we write
$\partial_{\ul{\beta}}f(P_i)=\partial_{\ul{\beta}}f(\underline{z_i})=\sum \limits_{|\underline{\alpha}|=t} C_{\ul{\alpha}}
\partial_{\ul{\beta}}\ul{z_i}^{\ul{\alpha}}$ and $\partial_{\ul{\beta}}f(p_i)= \sum \limits_{|\underline{\alpha_i}|=t}
C_{\ul{\alpha}} \partial_{\ul{\beta}}\ul{\lambda_i}^{\ul{\alpha}}$. (For instance, $\partial_{(2,0,1)}\ul{z_i}^{(3,3,2)}=\partial_{x_0x_0x_2} x_0^3x_1^3x_2^2|_{\underline{x}=\underline{z_i}}=12x_0x_1^3x_2|_{\underline{x}=\underline{z_i}}=12z_{i0}z_{i1}^3z_{i2}$ and  $\partial_{(2,0,1)}\ul{\lambda_i}^{(3,3,2)}=12\lambda_{i0}\lambda_{i1}^3\lambda_{i2}$).

To order these equations we use, for instance, the natural deglex order in $\mathbb{N}_0^{N+1}$, i.e., $\underline{\alpha}=(\alpha_0, \ldots, \alpha_N)> \underline{\beta}=(\beta_0, \ldots, \beta_N)$ if and only if $|\underline{\alpha}|>|\underline{\beta}|$ or $|\underline{\alpha}|= |\underline{\beta}|$ and there exists $j$ such that $\alpha_i=\beta_i$ for $ i\leq j$ and $\alpha_{j+1}> \beta_{j+1}$.
Then the system of equations
$\{\partial_{\ul{\beta}}f(P_i)=0\}_{|\beta|\leq m-1,\,1\leq i\leq n}$ can be written  in the following form
$$
\mathbb{B}_{m,t}\left[C_{(t,\ldots, 0)}\; \ldots \;C_{\ul{\alpha}}\; \ldots \; C_{(0,\ldots, t)}\right]^T=\ul{0},
$$
where the rows of $\mathbb{B}_{m,t}$ are
$$\left[\partial_{\ul{\beta}}z_{i0}^t \quad\ldots\quad
\partial_{\ul{\beta}}\ul{z_i}^{\ul{\alpha}}\quad\ldots\quad \partial_{\ul{\beta}} z_{iN}^t \right],
\,\,\,\mbox{ where }1 \leq i \leq n \mbox{ and }|\ul{\beta}| \leq m-1.$$
By construction, the existence of a nonzero element $f\in H({\underline{\lambda}})^{(m)}$ of degree $t$ is equivalent to the existence of a non-trivial solution for the homogeneous system $$\left[\mathbb{B}_{m,t}\right]_{\ul{\lambda}} \left[C_{(t,\ldots, 0)}\; \ldots \;C_{\ul{\alpha}}\; \ldots \; C_{(0,\ldots, t)}\right]^T=\ul{0}.$$

 Observe that the matrix $\mathbb{B}_{m,t}$ has size $n\binom{m+N}{m-1}\times \binom{t+N}{N}$. If $n\binom{m+N}{m-1}< \binom{t+N}{N}$, then for every $\ul{\lambda}\in \mathbb A_k^{n(N+1)}$ the homogeneous system $\left[\mathbb{B}_{m,t}\right]_{\ul{\lambda}} \left[C_{(t,\ldots, 0)}\; \ldots \;C_{\ul{\alpha}}\; \ldots \; C_{(0,\ldots, t)}\right]^T=\ul{0}$ has non-trivial solutions. Therefore $V_t=\mathbb A_k^{n(N+1)}$, which is closed in $\mathbb A_k^{n(N+1)}$.

If instead, $n\binom{m+N}{m-1}\geq \binom{t+N}{N}$, then the system $\left[\mathbb{B}_{m,t}\right]_{\ul{\lambda}}
\left[C_{(t,\ldots, 0)}\; \ldots \;C_{\ul{\alpha}}\; \ldots \; C_{(0,\ldots, t)}\right]^T=\ul{0}$ has non-trivial solutions
if and only if ${\rm rank}\left[\mathbb{B}_{m,t}\right]_{\ul{\lambda}}\,<\,\binom{t+N}{N}$. This is a closed condition on
$\underline{\lambda}$ as it requires the vanishing of finitely many minors, and therefore $V_t$ is closed in $\mathbb A_k^{n(N+1)}$.

Next, let $t_0=\alpha(H^{(m)})$. The set $V_{t_0}=\{\underline{\lambda}=(\lambda_{ij}) \in \mathbb A_k^{n(N+1)} \mid \alpha(H(\underline{\lambda})^{(m)})\leq t_0\}$ contains a dense Zariski-open   subset of $\mathbb A_k^{n(N+1)}$. Indeed, let $0\neq f\in \bigcap \limits_{i=1}^n I(P_i)^m$ be such that $\deg f=t_0$. We may assume that $f(\underline{z}, \underline{x})\in k[\underline{z}][x_0, \ldots, x_N]$.  Then there exists a dense Zariski-open  subset $U_m$ of $\mathbb A_k^{n(N+1)}$ such that the polynomial $0\neq f(\underline{\lambda}, \underline{x})\in (H^{(m)})_{\ul{\lambda}}=(H(\ul{\lambda}))^{(m)}$ and $\deg f=t_0$ (since $f(\ul{z},\ul{x})\neq 0$,  there is a non-empty Zariski-open subset of specializations $\ul{z}\rightarrow \ul{\lambda}$ such that $f(\ul{\lambda},\ul{x})\neq 0$).

Finally, since  $V_{t_0}$ is a Zariski-closed subset which also  contains a dense Zariski-open  subset of $\mathbb A_k^{n(N+1)}$, then $V_{t_0}=\mathbb A_k^{n(N+1)}$, which proves the statement. The second part of the statement also follows from the above argument.
\QED
\bigskip

Following \cite[Definition~2.4]{GMR}, we say that a set $X$ of $n$ points in $\PN$ is in {\it generic position}
if it has the ``generic Hilbert function'', i.e., if $H_{R/I_X}(d)=\min\{\dim_k(R_d), n\}$ for every $d\geq 0$. Being in generic position is an open condition; indeed any set of generic (or general) points (see Set-up~\ref{setup2}) is in generic position.
We now prove a reduction argument, which will allow us to concentrate on certain binomial numbers of points.

\begin{Proposition}\label{binom2}
\begin{enumerate}[$($a$)$.]
\item Chudnovsky's Conjecture~\ref{CC} holds for any finite set of  generic points if it holds for sets of $\binom{\beta+N-1}{N}$ generic points for all $\beta\geq 1$.\\
\item Chudnovsky's Conjecture~\ref{CC} holds for any finite set of points if it holds for sets of $\binom{\beta+N-1}{N}$ points in generic position for all $\beta\geq 1$.
\end{enumerate}
\end{Proposition}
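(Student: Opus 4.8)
The plan is to treat both parts with a single device: given an arbitrary configuration $X$ of $n$ points, pass to a sub-configuration $X'\subseteq X$ of \emph{exactly} $\binom{\beta+N-1}{N}$ points, of the required type (generic, resp.\ in generic position), having the \emph{same} initial degree $\beta$, and then chain
$$
\frac{\alpha(I_X^{(m)})}{m}\ \geq\ \frac{\alpha(I_{X'}^{(m)})}{m}\ \geq\ \frac{\alpha(I_{X'})+N-1}{N}\ =\ \frac{\alpha(I_X)+N-1}{N}.
$$
In this chain the middle inequality is Chudnovsky's Conjecture~\ref{CC} for $X'$ (the assumed case) and the final equality is the matching of initial degrees; the only genuinely ideal-theoretic input is the first inequality, which holds because $X'\subseteq X$ forces $I_X\subseteq I_{X'}$, hence $I_X^{(m)}\subseteq I_{X'}^{(m)}$ and $\alpha(I_X^{(m)})\geq\alpha(I_{X'}^{(m)})$ for every $m\geq1$. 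I would also record the elementary observation that a set $Y$ of $\binom{\beta+N-1}{N}$ points in generic position satisfies $\alpha(I_Y)=\beta$: since $\dim_k R_{\beta-1}=\binom{\beta-1+N}{N}=\binom{\beta+N-1}{N}$, the generic Hilbert function gives $(I_Y)_{\beta-1}=0$, while $\dim_k R_\beta=\binom{\beta+N}{N}>\binom{\beta+N-1}{N}$ gives $(I_Y)_\beta\neq0$.

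For part (a) I would take $X$ a set of $n$ generic points and let $\beta\geq1$ be the largest integer with $\binom{\beta+N-1}{N}\leq n$ (it exists because $\binom{N}{N}=1\leq n$ and $\binom{\beta+N-1}{N}\to\infty$). Any subset $X'\subseteq X$ with $|X'|=\binom{\beta+N-1}{N}$ is itself a set of generic points in the sense of Set-up~\ref{setup2}, hence is in generic position, so $\alpha(I_{X'})=\beta$ by the observation above; moreover $n$ generic points are in generic position and $\binom{\beta-1+N}{N}\leq n<\binom{\beta+N}{N}$ by maximality of $\beta$, so $\alpha(I_X)=\beta$ as well. Applying the chain with the hypothesis for $X'$ (a binomial number of generic points) then gives Chudnovsky's Conjecture~\ref{CC} for $X$.

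For part (b) the point requiring care is that an arbitrary subset of an arbitrary set of points need not be in generic position, so $X'$ must be engineered. Given an arbitrary (simple) set $X=\{q_1,\dots,q_n\}$, I would put $\beta=\alpha(I_X)$, so that $(I_X)_{\beta-1}=0$ and the evaluation map $R_{\beta-1}\to k^n$ is injective with image $V$ of dimension $r:=\dim_k R_{\beta-1}=\binom{\beta+N-1}{N}$ (in particular $r\leq n$). By the standard fact that an $r$-dimensional subspace of $k^n$ projects isomorphically onto a suitable set of $r$ coordinates, I can choose $X'\subseteq X$ with $|X'|=r$ so that the evaluation map $R_{\beta-1}\to k^r$ attached to $X'$ remains injective, i.e.\ $(I_{X'})_{\beta-1}=0$. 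Multiplying by a general linear form gives $(I_{X'})_d=0$ for all $d\leq\beta-1$, and since $R/I_{X'}$ is a reduced $1$-dimensional ring its Hilbert function is non-decreasing and bounded above by $|X'|=r$, while it already equals $r$ in degree $\beta-1$; hence it equals $r$ in every degree $\geq\beta-1$, so $X'$ is in generic position and $\alpha(I_{X'})=\beta=\alpha(I_X)$. The same chain, now with the hypothesis for the $\binom{\beta+N-1}{N}$ points in generic position $X'$, completes the argument.

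The step I expect to be the main obstacle is exactly this extraction in part (b): one must pull out of a possibly highly degenerate configuration a sub-configuration of the precise binomial cardinality $\binom{\beta+N-1}{N}$ that is at once in generic position and of the same initial degree $\beta$, and it is the interaction of the coordinate-projection argument with the Hilbert-function behaviour of finite sets of points that makes this possible. The remaining ingredients --- monotonicity of $\alpha(I^{(m)})$ under inclusion of point sets, and the evaluation of $\alpha$ through the generic Hilbert function --- are routine.
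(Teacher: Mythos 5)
Your proposal is correct and follows essentially the same route as the paper: both parts reduce to a subset $X'\subseteq X$ of exactly $\binom{\beta+N-1}{N}$ points with $\alpha(I_{X'})=\alpha(I_X)$, use $X'\subseteq X \Rightarrow I_X^{(m)}\subseteq I_{X'}^{(m)}$, and then invoke the hypothesis for the binomial case, with your part (a) matching the paper's word for word. The only divergence is in (b), where the paper extracts $Y$ with $H_{R/I_Y}(i)=\dim_k R_i$ for $i\leq\alpha-1$ by citing \cite[Theorem~2.5~(c)]{GMR}, whereas you prove this directly via the evaluation map $R_{\alpha-1}\to k^n$ and a coordinate projection together with the monotonicity of the Hilbert function; note also that your cardinality $\dim_k R_{\alpha-1}=\binom{\alpha+N-1}{N}$ is the intended one (the binomial displayed at that point in the paper's proof is an off-by-one slip, as their subsequent claim $H_{R/I_Y}(\alpha-1)=t$ shows).
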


\demo (a): Let $H=\bigcap \limits_{i=1}^{n}I(P_i)$ be the defining ideal of the $n$ generic  points $P_1, \ldots, P_n$ as in Set-up~\ref{setup2}. Let $\beta\geq 1$ be the unique integer such that
$$
\binom{\beta+N-1}{N} \leq n<\binom{\beta+N}{N}.
$$
Let $t=\binom{\beta+N-1}{N}$ and let  $J=\bigcap \limits_{i=1}^{t}I(P_i)$ be the ideal defining $t$ of these generic points. Since the set $Y=\{P_1, \ldots, P_t\}$ is in generic position, in particular we have $\alpha(J)=\alpha(H)=\beta$.

Now assume  Chudnovsky's Conjecture~\ref{CC} holds for $\binom{\beta+N-1}{N}$ generic points. Then for all $m \geq 1$
$$
\displaystyle\frac{\alpha(J^{(m)})}{m}\geq \frac{\alpha(J)+N-1}{N}.
$$
Since $H^{(m)}\subseteq J^{(m)}$, one has that $\alpha(H^{(m)})\geq \alpha(J^{(m)})$ and
$$
\displaystyle\frac{\alpha(H^{(m)})}{m}\geq \displaystyle\frac{\alpha(J^{(m)})}{m}\geq \frac{\alpha(J)+N-1}{N}=\frac{\alpha(H)+N-1}{N}.
$$

(b): The proof of (b) is similar in spirit to (a). Let $X$ be any finite set of points in $\PN$, let $I_X$ be its defining ideal, and let $t=\binom{(\alpha-1)+N-1}{N}$, where $\alpha=\alpha(I_X)$. By linear independence  (e.g., by \cite[Theorem~2.5~(c)]{GMR}), there is a subset $Y\subseteq X$ of $t$ points with the property that  $H_{R/I_Y}(i)=H_{R/I_X}(i)=\dim_kR_i$ for every $i=0,\ldots,\alpha-1$; in particular $H_{R/I_Y}(\alpha-1)=t$. Since $|Y|=t$,  it follows that $H_{R/I_Y}(i)=t$ for all $i\geq\alpha$, proving that $Y$ is in generic position. Similar to (a), assume Chudnovsky's Conjecture~\ref{CC} holds for $t=\binom{(\alpha-1)+N-1}{N}$ points in generic position. Then
$$
\displaystyle\frac{\alpha(I_Y^{(m)})}{m}\geq \frac{\alpha(I_Y)+N-1}{N}
$$
for all $m\geq 1$. Since $\alpha(I_X)=\alpha=\alpha(I_Y)$ and $I_X^{(m)}\subseteq I_Y^{(m)}$, then  for all $m\geq 1$  we obtain
$$
\displaystyle\frac{\alpha(I_X^{(m)})}{m}\geq \displaystyle\frac{\alpha(I_Y^{(m)})}{m}\geq \frac{\alpha(I_Y)+N-1}{N}=\frac{\alpha(I_X)+N-1}{N}.
$$
\QED
\bigskip

Dumnicki proved Chudnovsky's Conjecture~\ref{CC} for at most $N+1$ points in general position $\mathbb{P}_k^N$ \cite{D} (this specific result does not need any assumptions on the characteristic of $k$).
The idea is that one can take them to be coordinate points so that the ideal of the points is monomial and one
can compute explicitly its symbolic powers. If one has more than $N+1$ points, the ideal of the points is almost never monomial and explicit computations of a generating set of any of its symbolic powers are nearly impossible to perform. We extend the result of Dumnicki to the case of up to $\binom{N+2}{2}-1$ points in $\mathbb{P}_k^N$.

\begin{Proposition}\label{2N2}
 Chudnovsky's Conjecture~\ref{CC} holds for any finite set of points lying on a quadric in $\PN$, where $k$ is any field. In particular, any set of $\displaystyle n\leq \binom{N+2}{2}-1$ points in $\mathbb{P}_k^N$ satisfies Chudnovsky's Conjecture~\ref{CC}.
\end{Proposition}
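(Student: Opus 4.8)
The plan is to use the elementary equivalence ``$X$ lies on a quadric'' $\Longleftrightarrow$ $\alpha(I_X)\leq 2$, and then to treat the two possibilities $\alpha(I_X)=1$ and $\alpha(I_X)=2$ separately, reducing the second to a theorem of Dumnicki.

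If $\alpha(I_X)=1$, then $X$ lies on a hyperplane and Chudnovsky's inequality is simply $\alpha(I_X^{(m)})\geq m$. I would get this from $I_X^{(m)}\subseteq I(p)^{(m)}$ for any $p\in X$, together with $\alpha\big(I(p)^{(m)}\big)=m$: over $\overline{k}$ the prime $I(p)$ becomes an intersection of ideals of rational points, each a complete intersection of $N$ linear forms, so $I(p)^{(m)}$ contains an $m$-th power of a linear form but no nonzero element of smaller degree, and degrees cannot drop under $k\hookrightarrow\overline{k}$. (One could instead just cite that Chudnovsky's conjecture is classically known for points on a hyperplane.)

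The case $\alpha(I_X)=2$ is the substance, and here the plan is a reduction to $N+1$ points (the $\alpha=2$ instance of the reduction in Proposition~\ref{binom2}). Since $\alpha(I_X)=2$ the set $X$ spans $\mathbb{P}_k^N$, so I can choose $N+1$ linearly independent points $Y\subseteq X$; a linear change of coordinates makes $Y$ the set of coordinate points, so $I_Y$ is the monomial ideal generated by the $x_ix_j$ with $0\leq i<j\leq N$, and Dumnicki's theorem \cite{D} — which imposes no hypothesis on $k$ — gives Chudnovsky's conjecture for $Y$. The crux is that this reduction does not disturb the right-hand side: since $|Y|=N+1<\binom{N+2}{2}=\dim_k R_2$, the bound $H_{R/I_Y}(2)\leq|Y|$ gives $\dim_k(I_Y)_2\geq\binom{N+2}{2}-(N+1)\geq 1$, so $I_Y$ carries a quadric, while $Y$ spans $\mathbb{P}_k^N$ and hence $I_Y$ carries no linear form; therefore $\alpha(I_Y)=2=\alpha(I_X)$. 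Finally, $Y\subseteq X$ gives $I_X^{(m)}=\bigcap_{p\in X}I(p)^{(m)}\subseteq\bigcap_{p\in Y}I(p)^{(m)}=I_Y^{(m)}$, so $\alpha(I_X^{(m)})\geq\alpha(I_Y^{(m)})$, and hence for every $m\geq 1$
$$\frac{\alpha(I_X^{(m)})}{m}\geq\frac{\alpha(I_Y^{(m)})}{m}\geq\frac{\alpha(I_Y)+N-1}{N}=\frac{\alpha(I_X)+N-1}{N}.$$
The ``in particular'' is then immediate: if $n\leq\binom{N+2}{2}-1$, then $\dim_k(I_X)_2\geq\dim_k R_2-H_{R/I_X}(2)\geq\binom{N+2}{2}-n\geq 1$, so $X$ lies on a quadric and the previous paragraphs apply.

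The only thing that genuinely needs care is the verification that the auxiliary set $Y$ of $N+1$ linearly independent points still has $\alpha(I_Y)=\alpha(I_X)=2$ — i.e. that a quadric survives the restriction — together with the routine check that Dumnicki's statement holds in arbitrary characteristic. I would expect the alternative temptation, namely a self-contained argument dissecting an arbitrary $f\in I_X^{(m)}$ by dividing out a quadric $q\in(I_X)_2$ and inducting on $N$, to be messier: the degenerate quadrics ($q$ a union of two hyperplanes, or a cone over a lower-dimensional quadric) make the multiplicity of $q$ at points of $X$ jump to $2$ and push one into non-uniform fat-point estimates over $\mathbb{P}_k^{N-1}$, whereas the subset reduction sidesteps all of that.
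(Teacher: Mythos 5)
Your proposal is correct and follows essentially the same route as the paper: dispose of the hyperplane case via $\alpha(I_X^{(m)})\geq m$, and otherwise pick $N+1$ points of $X$ not on a hyperplane, note $\alpha(I_Y)=\alpha(I_X)=2$, and conclude from $I_X^{(m)}\subseteq I_Y^{(m)}$ together with Dumnicki's theorem for $N+1$ points in general position. The extra verifications you supply (that a quadric through $Y$ exists, the characteristic-free nature of \cite{D}, and the Hilbert-function count for the ``in particular'') are exactly the implicit steps in the paper's shorter argument.
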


\demo
Let $X$ be a set of $n$ points in $\mathbb{P}_k^N$. If they all lie on a hyperplane, then Chudnovsky's Conjecture~\ref{CC} is clearly satisfied, since $\alpha(I_X^{(m)})=m$ for every $m\geq 1$. We may then assume there is no hyperplane containing all the points. Thus we can find a set $Y\subseteq X$ of $N+1$ points not on a hyperplane, i.e. in general position.
Then for all $m\geq 1$
$$\frac{\alpha(I_X^{(m)})}{m}\geq \frac{\alpha(I_Y^{(m)})}{m}\geq \frac{N+1}{N}=\frac{\alpha(I_X)+N-1}{N},$$
where the second inequality follows by \cite{D}  and the equality holds because $\alpha(I_X)=\alpha(I_Y)=2$.
\QED
\bigskip

Let us recall that a set $X$ of $\binom{N+s}{N}$ points in $\mathbb P_k^N$ form a {\em star configuration} if there are $N+s$ hyperplanes $L_1,\ldots,L_{N+s}$ meeting properly such that $X$ consists precisely of the points obtained by intersecting any $N$ of the $L_i$'s. Star configurations (in $\mathbb P_k^2$) were already considered by Nagata and they have been  deeply studied, see for instance \cite{GHM} and references within. We employ them to show that  Chudnovsky's Conjecture~\ref{CC} holds for any number of generic points.

\begin{Theorem}\label{generic2}
Let $H=\bigcap \limits_{i=1}^{n}I(P_i)$, where $P_1, \ldots, P_n$ are  $n$ generic points in $\mathbb{P}_{k(\underline{z})}^N$ defined as in Set-up~\ref{setup2}. Suppose $k$ has characteristic $0$.
Then Chudnovsky's Conjecture~\ref{CC} holds for $H$.

\end{Theorem}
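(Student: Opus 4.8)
The plan is to deduce the statement from three ingredients already in place: the reduction to binomial numbers of points in Proposition~\ref{binom2}(a), the specialization inequality of Theorem~\ref{Vt2}, and the fact that Chudnovsky's Conjecture~\ref{CC} is known for star configurations \cite{De}, \cite{BH}.

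By Proposition~\ref{binom2}(a) it suffices to verify Conjecture~\ref{CC} for $H=\bigcap_{i=1}^{n}I(P_i)$ in the case $n=\binom{\beta+N-1}{N}$, for an arbitrary integer $\beta\geq 1$; so assume $n$ has this form. Since generic points are in generic position and $n=\binom{\beta+N-1}{N}=\dim_k R_{\beta-1}$, the Hilbert function of $R/H$ equals $\dim_k R_{\beta-1}$ in degree $\beta-1$ and equals $n<\dim_k R_{\beta}$ in degree $\beta$; hence $\alpha(H)=\beta$.

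The core of the argument is to produce one concrete reduced set $X$ of exactly $n=\binom{\beta+N-1}{N}$ points in $\mathbb{P}_k^N$ with $\alpha(I_X)=\beta$ for which Conjecture~\ref{CC} is already known, and then transport the bound to $H$ via Theorem~\ref{Vt2}. I would take $X$ to be the star configuration defined by $\beta+N-1$ hyperplanes $L_1,\dots,L_{\beta+N-1}$ of $\mathbb{P}_k^N$ in general position --- such hyperplanes exist since $k$ is infinite --- that is, the $\binom{\beta+N-1}{N}$ points obtained by intersecting $N$ of them at a time. Each point of $X$ lies on exactly $N$ of the $L_i$, and its complementary collection of hyperplanes has only $\beta-1$ members, so the product of \emph{any} $\beta$ of the $L_i$ vanishes on all of $X$; this gives $\alpha(I_X)\leq\beta$, and the reverse inequality is the standard computation of the initial degree of a star configuration (see, e.g., \cite{GHM}), so $\alpha(I_X)=\beta=\alpha(H)$. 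Being a binomial number of points forming a star configuration, $X$ satisfies Chudnovsky's Conjecture~\ref{CC} by \cite{De}, \cite{BH}.

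Finally, applying Theorem~\ref{Vt2} --- this is where the characteristic-zero hypothesis is used --- to the set $X$, which consists of $n$ distinct points, gives $\alpha(H^{(m)})\geq\alpha(I_X^{(m)})$ for every $m\geq 1$, whence
$$\frac{\alpha(H^{(m)})}{m}\;\geq\;\frac{\alpha(I_X^{(m)})}{m}\;\geq\;\frac{\alpha(I_X)+N-1}{N}\;=\;\frac{\beta+N-1}{N}\;=\;\frac{\alpha(H)+N-1}{N},$$
which is exactly Conjecture~\ref{CC} for $H$. I expect the only substantive input to be Theorem~\ref{Vt2}, which is already established; what remains is bookkeeping --- matching the binomial $\binom{\beta+N-1}{N}$ supplied by Proposition~\ref{binom2}(a) with the point count of a star configuration on $\beta+N-1$ hyperplanes, and the elementary evaluation of $\alpha(I_X)$ --- together with the trivial case $\beta=1$ (a single generic point, where $\alpha(H^{(m)})=m$ and the conjecture reads $1\geq 1$).
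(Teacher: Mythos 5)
Your proposal is correct and follows essentially the same route as the paper: reduce to $n=\binom{\beta+N-1}{N}$ via Proposition~\ref{binom2}(a), specialize to a star configuration with $\alpha=\beta$ using Theorem~\ref{Vt2}, and invoke the known case of Chudnovsky's Conjecture for star configurations (the paper cites \cite[Corollary~3.9]{HaHu} and \cite[Proposition~2.9]{GHM} where you cite \cite{De}, \cite{BH} and give the elementary degree computation, but these are only citation-level differences).
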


\demo
By Proposition~\ref{binom2}~(a), we may  assume $n=\binom{\beta+N-1}{N}$ for some $\beta \geq 1$. Let
$\underline{\lambda}\in \mathbb{A}_k^{n(N+1)}$ be such that $H({\underline{\lambda}})$ is the defining ideal of $n$  points in $\mathbb{P}_k^N$
forming a star configuration. It is well-known that $\alpha(H(\underline{\lambda}))=\alpha(H)=\beta$  \cite[Proposition~2.9]{GHM}.
Now by Theorem~\ref{Vt2} and \cite[Corollary~3.9]{HaHu} we have that for all $m\geq 1$
$$
\frac{\alpha(H^{(m)})}{m}\geq \frac{\alpha(H(\underline{\lambda})^{(m)})}{m}\geq \frac{\alpha(H(\underline{\lambda}))+N-1}{N}=\frac{\alpha(H)+N-1}{N}.
$$
\QED
\bigskip

We are now ready to prove our main result that Chudnovsky's Conjecture~\ref{CC} holds for  any finite set of  {\it very general} points in $\mathbb P_k^N$.

\begin{Theorem}\label{main2}
Let $I$ be the defining ideal of $n$ very general points in $\mathbb{P}_{k}^N$, where $k$ is an algebraically closed field of characteristic 0. Then $I$ satisfies Chudnovsky's Conjecture~\ref{CC}.
\end{Theorem}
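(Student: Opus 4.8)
The plan is to assemble three results already established in this section: Theorem~\ref{generic2} (Chudnovsky's Conjecture for the generic ideal $H$), Theorem~\ref{Vt2} (the quantity $\alpha(H^{(m)})$ coincides with $\alpha(H(\underline{\lambda})^{(m)})$ on a dense Zariski-open subset of $\mathbb{A}_k^{n(N+1)}$), and Lemma~\ref{open} (transfer of a property holding on a countable intersection of dense Zariski-open subsets of $\mathbb{A}_k^{n(N+1)}$ to very general points of $\mathbb{P}_k^N$). No new geometric input is required: the argument is a specialization-and-openness bookkeeping.

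Concretely, by Theorem~\ref{generic2} the generic ideal $H = \bigcap_{i=1}^n I(P_i)$ of Set-up~\ref{setup2} satisfies
$$
\frac{\alpha(H^{(m)})}{m}\ \geq\ \frac{\alpha(H)+N-1}{N}\qquad\text{for every } m\geq 1 .
$$
For each $m\geq 1$, Theorem~\ref{Vt2} furnishes a dense Zariski-open subset $U_m\subseteq\mathbb{A}_k^{n(N+1)}$ on which $\alpha(H(\underline{\lambda})^{(m)})=\alpha(H^{(m)})$; in particular, applying this with $m=1$ gives, on $U_1$, the equality $\alpha(H(\underline{\lambda}))=\alpha(H)$. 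Let $W_0\subseteq\mathbb{A}_k^{n(N+1)}$ be the dense Zariski-open subset on which $\underline{\lambda}_i\neq 0$ for every $i$ and the points $p_i=P_i(\underline{\lambda})$ are pairwise distinct, and put
$$
U\ =\ W_0\ \cap\ \bigcap_{m=1}^{\infty} U_m .
$$
Then $U$ is a countable intersection of dense Zariski-open subsets of $\mathbb{A}_k^{n(N+1)}$, and for each $\underline{\lambda}\in U$ the ideal $H(\underline{\lambda})$ is the defining ideal of a set of $n$ distinct points in $\mathbb{P}_k^N$ for which
$$
\frac{\alpha(H(\underline{\lambda})^{(m)})}{m}\ =\ \frac{\alpha(H^{(m)})}{m}\ \geq\ \frac{\alpha(H)+N-1}{N}\ =\ \frac{\alpha(H(\underline{\lambda}))+N-1}{N}
$$
for all $m\geq 1$; that is, Chudnovsky's Conjecture~\ref{CC} holds for $H(\underline{\lambda})$. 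Applying the second (very general) assertion of Lemma~\ref{open} to this $U$ yields the theorem, since $I$ is by hypothesis the defining ideal of $n$ very general points in $\mathbb{P}_k^N$.

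I do not expect a genuine obstacle at this stage: once Theorems~\ref{generic2} and~\ref{Vt2} are in hand, the proof is purely formal, and essentially all of the content lives in those two inputs (the star-configuration computation feeding Theorem~\ref{generic2} together with the Harbourne--Huneke containment, and the semicontinuity of the initial degree under Krull specialization feeding Theorem~\ref{Vt2}). The only points that need a little care are: (i) that each equality locus $U_m$ is genuinely Zariski-open and \emph{dense} (this is precisely the ``Moreover'' clause of Theorem~\ref{Vt2}), so that $U$ is again of the form to which the very-general part of Lemma~\ref{open} applies; and (ii) remembering to intersect with $W_0$, so that for $\underline{\lambda}\in U$ the configuration really is a set of $n$ distinct points rather than a degenerate $0$-cycle. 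Note that the binomial reduction of Proposition~\ref{binom2}~(a) is not needed here, since Theorem~\ref{generic2} already covers an arbitrary number of generic points.
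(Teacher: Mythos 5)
Your overall strategy (specialize from the generic ideal $H$, use the openness/semicontinuity from Theorem~\ref{Vt2}, and transfer to very general points via Lemma~\ref{open}) is the same as the paper's, but there is a genuine gap at the one place where the paper has to work: the nonemptiness of your set $U=W_0\cap\bigcap_{m\geq 1}U_m$. The second part of Lemma~\ref{open} (and indeed the very definition of ``very general'' used in the paper) requires the countable intersection of dense Zariski-open sets to be \emph{nonempty}. This is automatic when $k$ is uncountable, but the theorem is stated for an arbitrary algebraically closed field of characteristic $0$, e.g.\ $k=\overline{\mathbb{Q}}$, and over a countable field a countable intersection of dense opens in $\mathbb{A}_k^{n(N+1)}$ can perfectly well be empty (enumerate the $k$-points and remove one at a time). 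You never exhibit a point lying in all of your equality loci $U_m$ simultaneously, and there is no obvious candidate: the $U_m$ are the loci where $\alpha(H(\underline{\lambda})^{(m)})$ attains the generic value $\alpha(H^{(m)})$, and no explicit configuration is known to be generic in this sense for every $m$ at once.

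This is exactly why the paper's proof is organized differently at this step. It first reduces (using generic position and the argument of Proposition~\ref{binom2}~(a)) to $n=\binom{\beta+N-1}{N}$ points, and then defines the open sets not as equality loci but as the inequality loci $U_s=\{\underline{\lambda}\mid \alpha(H(\underline{\lambda})^{(2^sN)})\geq 2^s(\alpha(H(\underline{\lambda}))+N-1)\}$, which contain the dense open sets $W_s$ from Theorem~\ref{Vt2} (hence are dense) but are large enough that \emph{every star configuration of $n$ points lies in all of them}, by \cite[Lemma~2.4.2]{BH}; this witnesses $U=\bigcap_s U_s\neq\emptyset$ over any such $k$. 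So your closing remark that the binomial reduction of Proposition~\ref{binom2}~(a) is not needed is precisely where the problem surfaces: it is not needed for Theorem~\ref{generic2}, but it is needed so that star configurations of the right cardinality exist to certify nonemptiness. If you either add the hypothesis that $k$ is uncountable, or replace your equality loci by the paper's inequality loci together with the binomial reduction and the star-configuration witness, your argument closes up; as written, it does not prove the theorem in the stated generality.
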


\demo Being in generic position is an open condition and therefore we may assume the points are in generic position. Then, as in Proposition~\ref{binom2}~(a), we may assume that $n={\beta+N-1\choose N}$ for some $\beta \geq 1$. It suffices to show that $\gamma(I)\geq \frac{\alpha(I)+N-1}{N}$.
Let $R$, $S$, $\underline z$, $\underline \lambda$, and $H$ as in Setup~\ref{setup2}.
 Consider the decreasing chain of ideals
$$
I^{(N)}\supseteq I^{(2N)}\supseteq I^{(2^2N)}\supseteq \ldots \supseteq I^{(2^sN)} \supseteq \ldots .
$$
For each $s\geq 0$, define
$$
U_s=\{\underline{\lambda}=(\lambda_{ij})\in \mathbb A_k^{n(N+1)} \mid \alpha(H({\underline{\lambda}})^{(2^sN)}) \geq 2^s\left(\alpha(H(\underline{\lambda}))+N-1\right)\}.
$$
By  the proof of Theorem~\ref{Vt2}, $U_s$ is a Zariski-open subset of $\mathbb{A}_k^{n(N+1)}$. We claim that $U_s$ is not empty. Indeed, by Theorem~\ref{Vt2}, for every $s\geq 0$, there is a dense Zariski-open subset  $W_s\subseteq\mathbb{A}_k^{n(N+1)}$ for which $\alpha(H^{(2^sN)})= \alpha(H(\underline{\lambda})^{(2^sN)})$ for every $\underline{\lambda}\in W_s$. By Theorem~\ref{generic2}, one also has that
for every $\underline{\lambda}\in W_s$,
$$
\frac{\alpha(H(\underline{\lambda})^{(2^sN)})}{2^sN}=\frac{\alpha(H^{(2^sN)})}{2^sN}\geq \frac{\alpha(H)+N-1}{N}=\frac{\alpha(H(\underline{\lambda}))+N-1}{N}.
$$
Hence $W_s \subset U_s$.

Set $ U=\bigcap \limits_{s=0}^{\infty} U_s$  and notice $U$ is non-empty because a star configuration of $n$ points lies in $U$ \cite[Lemma~2.4.2]{BH}. By construction, if  $\underline{\lambda}\in U$ we have
$$
\gamma\left(H(\underline{\lambda})\right)=\displaystyle\lim\limits_{s\rightarrow \infty}\frac{\alpha\left(H(\underline{\lambda})^{(2^sN)}\right)}{2^sN}\geq \frac{\alpha\left(H(\underline{\lambda})\right)+N-1}{N}.
$$
Finally, apply Lemma~\ref{open}.
\QED
\bigskip

As a corollary, we show that the Harbourne-Huneke Conjecture~\ref{HH} holds for sets of binomial numbers of very general points or generic points.
  \begin{Corollary}\label{HH2}
Let $I$ be the defining ideal of either $\binom{\beta+N-1}{N}$ very general  points in $\mathbb{P}_k^N$ or $\binom{\beta+N-1}{N}$  generic points in $\mathbb{P}_{k(\underline{z})}^N$ for some $\beta\geq 1$, where $k$ is an algebraically closed field of characteristic 0. Then $I$ satisfies the Harbourne-Huneke Conjecture~\ref{HH}.

\end{Corollary}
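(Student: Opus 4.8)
The plan is to deduce directly the Harbourne--Huneke containment $I^{(Nm)}\subseteq M^{m(N-1)}I^m$ for every $m\geq 1$, from three ingredients: Chudnovsky's Conjecture~\ref{CC} for $I$ (now available --- Theorem~\ref{main2} in the very general case, Theorem~\ref{generic2} in the generic case), the Ein--Lazarsfeld--Smith / Hochster--Huneke containment $I^{(Nm)}\subseteq I^m$ \cite{ELS,HoHu} (applicable because every associated prime of $I$ has height $N$), and the fact that $I$, and hence every power $I^m$, is generated in a single degree. This last fact is the one requiring care, and it is exactly where the hypothesis that $n=\binom{\beta+N-1}{N}$ is a binomial number is used; I expect it to be the main obstacle.

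First I would check the single-degree generation. A set of $n=\binom{\beta+N-1}{N}$ very general (or generic) points is in generic position, so $H_{R/I}(\beta-1)=\min\{\dim_k R_{\beta-1},\,n\}=\min\{n,n\}=n$; thus $H_{R/I}$ already equals $n=\deg X$ in every degree $\geq\beta-1$. This forces $\alpha(I)=\beta$ and ${\rm reg}(I)=\beta$, hence that $I$ is minimally generated in the single degree $\beta$ (classical; cf.\ \cite{GMR}). Since the $m$-th power of an ideal generated in degree $\beta$ is generated by the products of $m$ of those generators, $I^m$ is generated in the single degree $m\beta$; in particular $I^m$ has no nonzero element of degree $<m\beta$ and no minimal generator of degree $>m\beta$. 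For a non-binomial number of points this collapses: $I$ is then typically generated in two consecutive degrees, $I^m$ spreads over several degrees around $m\alpha(I)$, and the count below no longer goes through --- which is why the statement is restricted to binomial numbers.

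Finally I would run the degree count. Let $0\neq f\in I^{(Nm)}$; since $I^{(Nm)}$ is a homogeneous ideal it suffices to treat $f$ homogeneous. By Theorem~\ref{main2} (resp.\ Theorem~\ref{generic2}), $\deg f\geq\alpha(I^{(Nm)})\geq m(\alpha(I)+N-1)=m\beta+m(N-1)$. By the Ein--Lazarsfeld--Smith / Hochster--Huneke containment, $f\in I^m$, so write $f=\sum_\ell a_\ell g_\ell$ with the $g_\ell$ generators of $I^m$, all of degree $m\beta$; comparing homogeneous components, the $a_\ell$ can be taken homogeneous of degree $\deg f-m\beta\geq m(N-1)$, so $a_\ell\in M^{m(N-1)}$. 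Hence $f\in M^{m(N-1)}I^m$, which is the Harbourne--Huneke Conjecture~\ref{HH} for $I$. In the generic case one argues identically over $S=k(\underline{z})[\underline{x}]$ with $M=(x_0,\dots,x_N)S$.
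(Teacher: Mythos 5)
Your proposal is correct and follows essentially the same route as the paper: the paper deduces the corollary from Theorem~\ref{main2} (resp.\ Theorem~\ref{generic2}) together with \cite[Proposition~3.3 and Remark~3.4]{HaHu}, and your argument simply unpacks that citation --- the Ein--Lazarsfeld--Smith/Hochster--Huneke containment $I^{(Nm)}\subseteq I^m$ plus the fact that a binomial number of points in generic position has $\mathrm{reg}(I)=\alpha(I)=\beta$, so $I^m$ is generated in degree $m\beta$ and the Chudnovsky bound $\alpha(I^{(Nm)})\geq m\beta+m(N-1)$ forces the coefficients into $M^{m(N-1)}$. The details you supply (generic position, single-degree generation, the degree count) are exactly the content of the cited Harbourne--Huneke results, so there is no gap.
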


\demo
The proof  follows by Theorem~\ref{main2} and \cite[Proposition~3.3 and Remark~3.4]{HaHu}.
\QED
\bigskip

For an unmixed ideal $I$ in $R$, the {\em Waldschmidt constant} is defined by  $\gamma(I)=\lim \limits_{m\rightarrow \infty}\frac{\alpha(I^{(m)})}{m}$, see~\cite{BH} for more details.
Recall that for a finite set of points $X=\{\mathfrak{p}_1, \ldots, \mathfrak{p}_n\}$ in $\mathbb P^N_k$, the Waldschmidt constant is tightly related to the {\it (multipoint)  Seshadri constant} defined as
$$\epsilon(N,X)=\sqrt[N-1]{{\rm inf}\left\{\frac{\deg(F)}{\sum \limits_{i=1}^n{\rm mult}_{\mathfrak{p_i}}(F)}\right\}},$$
where the infimum is taken with respect to all hypersurfaces $F$ passing through at least one of the $\mathfrak{p_i}$. The study of Seshadri constants has been an active area of research for the last twenty years, see for instance the survey \cite{Sesh} and references within. Here we only note that one has $\gamma(I_X)\geq n\epsilon(N,X)^{N-1}$ and equality holds if $X$ consists of $n$ general simple points in $\mathbb P^N_{k}$. In particular, equality also holds if $X$ consists of $n$ very general simple points $\mathbb{P}_k^N$.
 Therefore, our estimate for the Waldschmidt constant also yields an estimate for the (multipoint) Seshadri constant for very general simple points of $\mathbb P^N_k$. \begin{Corollary}\label{Seshadri2}
For any set $X$ of $n$ very general points in $\mathbb P^N_k$, where $k$ is an algebraically closed field of characteristic 0,  one has
 $$\epsilon(N,X)\geq \sqrt[N-1]{\frac{\alpha(X)+N-1}{nN}}.$$
 \end{Corollary}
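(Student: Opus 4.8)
The plan is to derive this as a direct consequence of Theorem~\ref{main2} together with the relationship between the Waldschmidt constant and the multipoint Seshadri constant recalled just before the statement. First I would note that, since $X$ consists of $n$ very general \emph{simple} points in $\mathbb{P}_k^N$, the equality
$$
\gamma(I_X)=n\,\epsilon(N,X)^{N-1}
$$
holds; this is exactly the case of equality in the inequality $\gamma(I_X)\geq n\,\epsilon(N,X)^{N-1}$ discussed above, which is valid for general, and hence for very general, sets of simple points.

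Next, since $k$ is algebraically closed of characteristic $0$ and $X$ is a set of very general points, Theorem~\ref{main2} applies to $I=I_X$ and gives Chudnovsky's Conjecture~\ref{CC} for $I_X$; in the Waldschmidt-constant formulation this reads $\gamma(I_X)\geq \frac{\alpha(I_X)+N-1}{N}$. Using $\alpha(I_X)=\alpha(X)$ (via the Nagata--Zariski identification $\alpha_1(X)=\alpha(I_X)$) and substituting the equality from the first step, we obtain
$$
n\,\epsilon(N,X)^{N-1}=\gamma(I_X)\geq \frac{\alpha(X)+N-1}{N}.
$$
Dividing by $n$ and taking the (positive) $(N-1)$-st root of both sides --- which is legitimate since $\epsilon(N,X)\geq 0$ and the right-hand side is positive --- yields
$$
\epsilon(N,X)\geq \sqrt[N-1]{\frac{\alpha(X)+N-1}{nN}},
$$
as claimed.

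I do not expect a genuine obstacle here: all the substance is contained in Theorem~\ref{main2}, and the argument is a one-line manipulation once the Waldschmidt--Seshadri dictionary is in place. The only point requiring care is that the equality $\gamma(I_X)=n\,\epsilon(N,X)^{N-1}$ genuinely needs the points to be (very) general and simple --- precisely our hypotheses --- since for an arbitrary finite set of points one only has $\gamma(I_X)\geq n\,\epsilon(N,X)^{N-1}$, which would point in the wrong direction for the desired conclusion.
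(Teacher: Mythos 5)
Your proposal is correct and follows exactly the paper's (implicit) argument: it combines the equality $\gamma(I_X)=n\,\epsilon(N,X)^{N-1}$, valid for (very) general simple points as recalled before the statement, with the bound $\gamma(I_X)\geq \frac{\alpha(X)+N-1}{N}$ from Theorem~\ref{main2}, and then takes $(N-1)$-st roots. You also correctly flag the one delicate point, namely that the general inequality $\gamma(I_X)\geq n\,\epsilon(N,X)^{N-1}$ alone would go the wrong way and the equality for very general simple points is what is really used.
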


 \bigskip

 \section{Homogeneous ideals  in $k[x_0, \ldots, x_N]$}

 Let $R=k[x_0, \ldots, x_N]$ be the homogeneous coordinate ring of $\mathbb{P}_k^N$, where $k$ is any field, and $I$ a homogeneous ideal. For an ideal $I$ which may have embedded components, there are multiple potential definitions of symbolic powers. Following \cite{ELS} and \cite{HoHu}, we define the {\em $m$-th symbolic power} of $I$ to be
 $$I^{(m)}= \bigcap_{p\in \Ass(R/I)} (I^mR_p\cap R).$$

 Since  $I^{(Nm)} \subseteq I^m$ (see \cite{ELS} and \cite{HoHu}), one can prove that  the Waldschmidt-Skoda inequality $\displaystyle\frac{\alpha(I^{(m)})}{m}\geq \frac{\alpha(I)}{N}$  holds for every homogeneous ideal $I$ in $R$ \cite{HaHu}.
Therefore one has that $\gamma(I)\geq \frac{\alpha(I)}{N}$.
One can also prove that $\frac{\alpha(I^{(m)})}{m}\geq \gamma(I)$ for every~$m\geq 1$, see for instance \cite{BH}.

It is then natural to ask whether  Chudnovsky's Conjecture~\ref{CC} holds for any homogeneous ideal. We pose it here as an optimistic conjecture, for which we provide some evidence below:
\begin{Conjecture}\label{FMX}
Let $R=k[x_0, \ldots, x_N]$, where $k$ is any field. For any nonzero homogeneous ideal $I$ in $R$, one has
$$\displaystyle\frac{\alpha(I^{(m)})}{m}\geq \frac{\alpha(I)+N-1}{N}$$
 for every $m\geq 1$.
\end{Conjecture}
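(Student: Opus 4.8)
The plan is to reduce Conjecture~\ref{FMX} to a single asymptotic estimate and then to attack that estimate via a containment of Harbourne--Huneke type. First, letting $m\to\infty$ and using $\gamma(I)=\lim_m\alpha(I^{(m)})/m$ shows that the conjecture \emph{forces} $\gamma(I)\geq\frac{\alpha(I)+N-1}{N}$; conversely, since $\alpha(I^{(m)})/m\geq\gamma(I)$ for every $m\geq 1$ (as recalled above), this one estimate already \emph{implies} the conjecture. So the problem is equivalent to proving
$$
\gamma(I)\ \geq\ \frac{\alpha(I)+N-1}{N}
$$
for every nonzero homogeneous ideal $I\subseteq R$.

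I would then try to prove, for arbitrary homogeneous $I$, a Harbourne--Huneke-style refinement of the Ein--Lazarsfeld--Smith/Hochster--Huneke containment, namely $I^{(Nm)}\subseteq M^{(N-1)m}I^m$ for all $m\geq 1$, or even merely an asymptotic form $I^{(Nm)}\subseteq M^{c_m}I^m$ with $c_m/m\to N-1$. Taking initial degrees in such a containment gives $\alpha(I^{(Nm)})\geq c_m+m\,\alpha(I)$, and dividing by $Nm$ and letting $m\to\infty$ yields exactly $\gamma(I)\geq\frac{\alpha(I)+N-1}{N}$ --- in precisely the way the plain containment $I^{(Nm)}\subseteq I^m$ yields the Waldschmidt--Skoda bound $\gamma(I)\geq\alpha(I)/N$. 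Thus Conjecture~\ref{FMX} would be a consequence of (an asymptotic form of) the Harbourne--Huneke Conjecture~\ref{HH} stated for all homogeneous ideals, not only for ideals of points.

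To reach the containment I see three candidate reductions, each importing a case that is already under control: (i) degenerate $I$ to a monomial initial ideal $I_0=\mathrm{in}(I)$, for which $\alpha(I_0)=\alpha(I)$ and symbolic powers are accessible by Newton-polyhedron combinatorics, provided one can prove $\alpha(I_0^{(m)})\leq\alpha(I^{(m)})$ for all $m$; (ii) intersect $I$ with a generic linear subspace of complementary dimension to land on a zero-dimensional scheme and invoke the very general points case (Theorem~\ref{main2}); or (iii) rerun the multiplier-ideal and characteristic-$p$ arguments behind \cite{ELS}, \cite{HoHu}, \cite{HaHu} while tracking the order of vanishing along the irrelevant maximal ideal $M$, so as to split off the extra factor $M^{(N-1)m}$. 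The hard part --- and the reason the conjecture is still open --- is that each of these reductions fails at exactly the step that would finish it: symbolic powers are not flat under a Gr\"obner degeneration and do not commute with generic hyperplane sections, so (i) and (ii) collapse as soon as $I$ is not a codimension-$N$ complete intersection, most acutely when $I$ has embedded primes, where even the meaning of $I^{(m)}$ is delicate; while in (iii) the missing $M$-adic vanishing is exactly the content of the still-open Harbourne--Huneke Conjecture~\ref{HH}, and the known counterexamples to superficially similar containments warn that no mechanical sharpening of the \cite{ELS}, \cite{HoHu} method will supply it.

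Failing the full containment, I would settle, as a first step, for the partial results the method does reach. For any nonzero homogeneous $I$ and all $t\gg 0$, the ideal $I^{(t)}$ satisfies Conjecture~\ref{FMX}: one combines $\alpha\big((I^{(t)})^{(m)}\big)/m\geq\gamma(I^{(t)})=t\,\gamma(I)$ with the convergence $\alpha(I^{(t)})=t\,\gamma(I)+o(t)$ and the positivity $\gamma(I)>0$ to see that, for $t$ large, the left-hand side of the Chudnovsky inequality for $I^{(t)}$ exceeds $\big(\alpha(I^{(t)})+N-1\big)/N$. For ideals of finite sets of points in $\mathbb{P}_{\mathbb{C}}^N$, a sharper estimate of $\alpha(I^{(t)})$ should let one replace ``$t\gg0$'' by the uniform threshold $t\geq N-1$.
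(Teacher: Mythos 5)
You have not proved the statement, but no proof exists to compare against: Conjecture~\ref{FMX} is posed in the paper as an open, ``optimistic'' conjecture, and the paper itself only proves partial results around it (Theorem~\ref{main2} for very general points, Theorem~\ref{HomogeneousIdeal3} for $I^{(t)}$ with $t\gg 0$, Proposition~\ref{HomogeneousIdeal11} for $t\geq N-1$ in the case of points over $\mathbb{C}$). Your proposal correctly recognizes this: the reduction to the single estimate $\gamma(I)\geq\frac{\alpha(I)+N-1}{N}$ (via $\alpha(I^{(m)})/m\geq\gamma(I)$) is sound, and the observation that a Harbourne--Huneke-type containment $I^{(Nm)}\subseteq M^{(N-1)m}I^m$ for arbitrary homogeneous ideals would yield it is exactly the structural mechanism the paper attributes to Conjecture~\ref{HH}. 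But none of your three candidate routes is carried out, and you concede as much; so as a proof of Conjecture~\ref{FMX} the proposal has an irreparable gap --- the conjecture remains open both in your write-up and in the paper.

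Your fallback paragraph essentially reproduces the paper's Theorem~\ref{HomogeneousIdeal3} and Proposition~\ref{HomogeneousIdeal11}, but it glosses over the one genuinely delicate point that the paper spends effort on. Your chain $\alpha\big((I^{(t)})^{(m)}\big)/m\geq\gamma(I^{(t)})=t\,\gamma(I)$ silently uses $(I^{(t)})^{(m)}=I^{(tm)}$ (or at least $(I^{(t)})^{(m)}\subseteq I^{(tm)}$); the containment that holds for free, $I^{(tm)}\subseteq (I^{(t)})^{(m)}$, gives the \emph{wrong} direction for your inequality. When $I$ has embedded primes this identity can genuinely fail --- the paper's Example~\ref{Counter} exhibits $(J^{(2)})^{(2)}\neq J^{(4)}$ --- and the fix is Proposition~\ref{m_0}: one must first pass to $t\geq m_0$, with $m_0$ coming from Brodmann's stability of associated primes. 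With that proviso inserted, your asymptotic argument ($\alpha(I^{(t)})=t\gamma(I)+o(t)$ together with $\gamma(I)\geq\alpha(I)/N>0$) does give the $t\gg 0$ statement, and is in fact somewhat more direct than the paper's case analysis via Lemma~\ref{HomogeneousIdeal2} and Lemma~\ref{HomogeneousIdeal1lemma}; likewise, the uniform threshold $t\geq N-1$ for points over $\mathbb{C}$ is obtained in the paper not from a sharper estimate of $\alpha(I^{(t)})$ but from the Esnault--Viehweg bound $\gamma(I)\geq\frac{\alpha(I)+1}{N}$ fed into Lemma~\ref{HomogeneousIdeal1lemma} with $\varepsilon=\frac1N$. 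So: correct diagnosis of the problem's status, correct reduction, but no proof of the conjecture, and one concrete technical gap (symbolic powers of ideals with embedded components do not commute without the $m_0$ threshold) in the partial results you do claim.
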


It is easy to see that if an ideal $I$ satisfies   Conjecture~\ref{FMX}  then also $I^{(t)}$ does for any $t\geq 1$. Thus in search for evidence for a positive answer to Conjecture~\ref{FMX}, one may ask whether for every homogeneous ideal $I\subseteq k[x_0,\ldots,x_N]$ there is an exponent $t_0$ such that $I^{(t)}$ satisfies Conjecture~\ref{FMX} for every $t\geq t_0$. We give a positive answer to this question in Theorem~\ref{HomogeneousIdeal3}.

We state a few lemmas before stating the main result of this section, Theorem~\ref{HomogeneousIdeal3}. The following lemma and its proof can be found in the proof of \cite[Lemma~2.3.1]{BH}.

\begin{Lemma}\label{HomogeneousIdeal2}
Let $I$ be a homogeneous ideal in $R$ and let $m\geq t$ be two positive integers. Write $m=qt + r$ for some integers $q$ and $r$ such that $0\leq r<t$. Then
$$
\frac{\alpha(I^{(m)})}{m}\leq \frac{\alpha(I^{(t)})}{t}+\frac{\alpha(I^{(r)})}{m}.
$$
In particular, if $r=0$ then we have
$\displaystyle
\frac{\alpha(I^{(tq)})}{tq}\leq \frac{\alpha(I^{(t)})}{t}.
$ \end{Lemma}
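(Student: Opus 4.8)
The plan is to deduce the inequality from the basic fact that symbolic powers behave multiplicatively with respect to addition of exponents, together with the subadditivity of the initial degree that this produces. First I would record that for any nonnegative integers $a,b$ one has $I^{(a)}I^{(b)}\subseteq I^{(a+b)}$. This is immediate from the definition $I^{(m)}=\bigcap_{p\in\Ass(R/I)}(I^mR_p\cap R)$: if $f\in I^{(a)}$ and $g\in I^{(b)}$, then for each $p\in\Ass(R/I)$ we have $f\in I^aR_p$ and $g\in I^bR_p$, hence $fg\in I^{a+b}R_p$; since also $fg\in R$, this gives $fg\in I^{(a+b)}$. (Here I use the convention $I^{(0)}=R$, so $\alpha(I^{(0)})=0$.) Since $R$ is an integral domain, multiplying a nonzero homogeneous element of $I^{(a)}$ of degree $\alpha(I^{(a)})$ by a nonzero homogeneous element of $I^{(b)}$ of degree $\alpha(I^{(b)})$ yields a nonzero element of $I^{(a+b)}$ of degree $\alpha(I^{(a)})+\alpha(I^{(b)})$, so
$$\alpha(I^{(a+b)})\leq \alpha(I^{(a)})+\alpha(I^{(b)}).$$

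Next I would iterate this. Writing $m=qt+r$ with $0\leq r<t$, an easy induction on $q$ using the displayed inequality gives
$$\alpha(I^{(m)})=\alpha(I^{(qt+r)})\leq q\,\alpha(I^{(t)})+\alpha(I^{(r)}).$$
Dividing by $m=qt+r$ and using that $\dfrac{q}{qt+r}\leq\dfrac{q}{qt}=\dfrac{1}{t}$ (valid because $r\geq 0$ and $\alpha(I^{(t)})\geq 0$), I obtain
$$\frac{\alpha(I^{(m)})}{m}\leq \frac{q\,\alpha(I^{(t)})}{qt+r}+\frac{\alpha(I^{(r)})}{m}\leq \frac{\alpha(I^{(t)})}{t}+\frac{\alpha(I^{(r)})}{m},$$
which is the claimed bound. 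For the final assertion, take $r=0$: then $\alpha(I^{(r)})=\alpha(I^{(0)})=\alpha(R)=0$, and the inequality collapses to $\dfrac{\alpha(I^{(tq)})}{tq}\leq\dfrac{\alpha(I^{(t)})}{t}$.

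A remark on difficulty: the argument is entirely formal, so there is no serious obstacle. The only points that need a word of care are the containment $I^{(a)}I^{(b)}\subseteq I^{(a+b)}$ (which uses that, with the chosen definition, all symbolic powers of $I$ are indexed by the same fixed set $\Ass(R/I)$), the convention $I^{(0)}=R$ that makes the $r=0$ specialization uniform with the general case, and the fact that $R$ is an integral domain, which is what guarantees that the product of two nonzero homogeneous forms is nonzero of the expected degree.
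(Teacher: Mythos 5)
Your proof is correct and follows essentially the same route as the argument the paper refers to (the proof of Lemma~2.3.1 in \cite{BH}): the containment $I^{(a)}I^{(b)}\subseteq I^{(a+b)}$ gives subadditivity of $\alpha$ on symbolic powers, and iterating with $m=qt+r$ yields the stated bound. Your explicit attention to the convention $I^{(0)}=R$ and to the fact that the containment holds with the $\Ass(R/I)$-based definition (even in the presence of embedded primes) is exactly the right care, and nothing further is needed.
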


For ideals $J$ with $\Ass(R/J)={\rm Min}(J)$ it is easily verified that $\Ass(R/J^{(m)})=\Ass(R/J)$ and $(J^{(m)})^{(t)}=J^{(mt)}$ for all $m\geq 1$ and $t\geq 1$. However, when $J$ has embedded components we found examples of ideals $J$ (even monomial ideals) and exponents $m\geq 2$, $t\geq 2$ with $(J^{(m)})^{(t)} \neq J^{(mt)}$. Borrowing techniques from a very recent paper by H\`a, Nguyen, Trung and Trung \cite{HNTT} we present here an example where $(J^{(2)})^{(2)}\neq J^{(4)}$.

\begin{Example}\label{Counter}
Let $R=k[x,t,u,v]$ and $J=J_1J_2$, where
$$J_1=(x^4,x^3u,xu^3,u^4,x^2u^2v) \quad \mbox{ and } \quad J_2=(t^3, tuv, u^2v).$$
Then $(J^{(2)})^{(2)}\neq J^{(4)}$.
\end{Example}

\demo It is easy to see check that  $m=(x,t,u,v)\in \Ass(R/J)$, for example because $y=x^2t^2u^3v$ is a non-trivial socle element of $R/J$. Therefore $J^{(n)}=J^n$ for every $n\geq 1$. Then ${\rm depth}(R/J^{(2)})={\rm depth}(R/J^2)=1$ and ${\rm{depth}} (R/J^{(4)})={\rm{depth}} (R/J^{4})=0$, for example by \cite[Example~6.6]{HNTT}. In particular, $m\notin \Ass(R/J^{(2)})$ and $m\in \Ass(R/J^{(4)})=\Ass(R/J^4)$. Hence by Remark ~\ref{Localizations}~(a) below,  $m\notin \Ass(R/(J^{(2)})^{(2)})$ and  therefore $(J^{(2)})^{(2)}\neq J^{(4)}$. \QED

\begin{Remark}\label{Localizations}
Let $J$ be an ideal in a Noetherian ring $S$ and $m\geq 1$ be a positive integer. Then
\begin{enumerate}[$($a$)$]
\item for any $q\in \Ass(S/J^{(m)})$ there exists $p\in \Ass(S/J)$ with $q\subseteq p$;
\item for any $p\in \Ass(S/J)$ one has $\left(J^{(m)}\right)_p=J_p^m$;
\item for any $p\in V(J)$ one has $\left(J^{(m)}\right)_p\subseteq (J_p)^{(m)}$.
\end{enumerate}
\end{Remark}

Despite Example \ref{Counter}, we prove that for any arbitrary ideal $J$ in a Noetherian ring $S$ there exists an integer $m_0=m_0(J)$ such that $(J^{(m)})^{(t)}=J^{(mt)}$ for all $m\geq m_0$ and $t\geq 1$. Of course, when $\Ass(S/J)={\rm Min}(J)$ one can take $m_0=1$. 

\begin{Proposition}\label{m_0}
Let $J$ be an ideal in a Noetherian ring $S$. Then
\begin{enumerate}[$($a$)$]
\item for all $m\geq 1$ and $t\geq 1$ one has $J^{(mt)}\subseteq (J^{(m)})^{(t)}$;
\item there exists $m_0\geq 1$ such that for all $m\geq m_0$ and $t\geq 1$ one has
$$(J^{(m)})^{(t)}=J^{(mt)}.$$
\end{enumerate}
\end{Proposition}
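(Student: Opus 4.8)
The plan is to extract both parts from Remark~\ref{Localizations} together with the eventual stability of the associated primes of powers of an ideal (Brodmann's theorem). For part~(a), I would fix $\mathfrak{q}\in\Ass(S/J^{(m)})$ and use Remark~\ref{Localizations}(a) to choose $p\in\Ass(S/J)$ with $\mathfrak{q}\subseteq p$. Since $\mathfrak{q}\subseteq p$, localizing the identity of Remark~\ref{Localizations}(b) once more at $\mathfrak{q}$ gives $(J^{(m)})_{\mathfrak{q}}=(J_{\mathfrak{q}})^{m}$, and in the same way $(J^{(mt)})_{\mathfrak{q}}=(J_{\mathfrak{q}})^{mt}$. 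Hence $\bigl((J^{(m)})^{t}\bigr)_{\mathfrak{q}}=(J_{\mathfrak{q}})^{mt}=(J^{(mt)})_{\mathfrak{q}}$, so every element of $J^{(mt)}$ already lies in $(J^{(m)})^{t}S_{\mathfrak{q}}\cap S$. Intersecting over all $\mathfrak{q}\in\Ass(S/J^{(m)})$ yields $J^{(mt)}\subseteq (J^{(m)})^{(t)}$, and this half needs no hypothesis on $m$.

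For part~(b) the containment $J^{(mt)}\subseteq(J^{(m)})^{(t)}$ is part~(a), so it remains to produce $m_0$ with $(J^{(m)})^{(t)}\subseteq J^{(mt)}$ for all $m\geq m_0$ and $t\geq1$. I would check this after localizing at each $p\in\Ass(S/J)$. Since $J^{(m)}\subseteq J^{m}S_p\cap S\subseteq p$ we have $p\in V(J^{(m)})$, so Remark~\ref{Localizations}(c) applied to the ideal $J^{(m)}$ together with Remark~\ref{Localizations}(b) gives
$$\bigl((J^{(m)})^{(t)}\bigr)_p\ \subseteq\ \bigl((J^{(m)})_p\bigr)^{(t)}\ =\ \bigl((J_p)^{m}\bigr)^{(t)}.$$
Everything thus reduces to a purely local statement: in the Noetherian local ring $A=S_p$ with $\mathfrak{a}=J_p$, one has $(\mathfrak{a}^{m})^{(t)}=\mathfrak{a}^{mt}$ for every $t\geq1$ provided $m$ is large.

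To prove that local claim I would note that $(\mathfrak{a}^{m})^{(t)}=\bigcap_{\mathfrak{q}\in\Ass(A/\mathfrak{a}^{m})}\bigl(\mathfrak{a}^{mt}A_{\mathfrak{q}}\cap A\bigr)$ is the intersection of exactly those primary components of $\mathfrak{a}^{mt}=(\mathfrak{a}^{m})^{t}$ (in a fixed irredundant primary decomposition) whose radical is contained in some member of $\Ass(A/\mathfrak{a}^{m})$; hence $(\mathfrak{a}^{m})^{(t)}=\mathfrak{a}^{mt}$ holds precisely when $\Ass(A/\mathfrak{a}^{mt})\subseteq\{\mathfrak{q}'\mid \mathfrak{q}'\subseteq\mathfrak{q}\text{ for some }\mathfrak{q}\in\Ass(A/\mathfrak{a}^{m})\}$. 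By Brodmann's theorem there is an integer $n_p$ with $\Ass(A/\mathfrak{a}^{n})$ independent of $n$ for all $n\geq n_p$; if $m\geq n_p$ then $mt\geq m\geq n_p$ forces $\Ass(A/\mathfrak{a}^{mt})=\Ass(A/\mathfrak{a}^{m})$ and the required inclusion is trivial. Since $\Ass(S/J)$ is finite, setting $m_0=\max\{\,n_p\mid p\in\Ass(S/J)\,\}$ handles all $p$ at once: for $m\geq m_0$ the displayed chain reads $\bigl((J^{(m)})^{(t)}\bigr)_p\subseteq(J_p)^{mt}=(J^{(mt)})_p$ for every $p\in\Ass(S/J)$, whence $(J^{(m)})^{(t)}\subseteq\bigcap_{p\in\Ass(S/J)}\bigl(J^{mt}S_p\cap S\bigr)=J^{(mt)}$, which finishes part~(b).

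The main obstacle is the local equality $(\mathfrak{a}^{m})^{(t)}=\mathfrak{a}^{mt}$ for $m\gg0$. The tempting shortcut---hoping that $\Ass(A/\mathfrak{a}^{m})$ collapses to the maximal ideal of $A$, so that $(\mathfrak{a}^{m})^{(t)}=(\mathfrak{a}^{m})^{t}$ trivially---fails, because $\Ass(A/\mathfrak{a}^{m})$ always contains the minimal primes of $\mathfrak{a}$, which can be strictly smaller than the maximal ideal of $A$ whenever $p$ is an embedded prime of $J$. One must instead exploit that the $\Ass$-definition of symbolic power records exactly the primary components of $\mathfrak{a}^{mt}$ lying below $\Ass(A/\mathfrak{a}^{m})$, and that Brodmann stabilization \emph{at the exponent $m$, not at $mt$} is precisely what prevents any of them from being dropped; that the threshold $m\geq m_0$ is genuinely needed is illustrated by Example~\ref{Counter}. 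The remaining ingredients---iterating Remark~\ref{Localizations}(b),(c) along the chain $\mathfrak{q}\subseteq p$, and the finiteness of $\Ass(S/J)$---are routine.
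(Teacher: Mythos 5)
Your argument is correct, and it runs on the same engine as the paper's proof: Brodmann's stabilization of $\Ass(S_p/J_p^n)$ for each of the finitely many $p\in\Ass(S/J)$, the choice $m_0=\max_p n_p$, and the localization facts of Remark~\ref{Localizations}. The differences are in the routing. For (a), the paper argues in one line with the nonzerodivisor characterization of symbolic powers: if $c$ avoids every prime in $\Ass(S/J)$ and $cx\in J^{mt}\subseteq (J^{(m)})^t$, then by Remark~\ref{Localizations}(a) $c$ also avoids $\Ass(S/J^{(m)})$, so $x\in (J^{(m)})^{(t)}$; your version, localizing at each $q\in\Ass(S/J^{(m)})$ and pushing Remark~\ref{Localizations}(b) down from $p$ to $q$, is equally valid. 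For (b), the paper reduces to checking the containment locally at the associated primes $q$ of $J^{(mt)}$ and closes the argument by a second application of Remark~\ref{Localizations}(b), now to the ideal $J_p^m$ of $S_p$ at $q$ (after using Brodmann to place $q$ in $\Ass(S_p/J_p^m)$), so it only ever needs the identity at one prime at a time. You instead go straight through the definition of $J^{(mt)}$ as $\bigcap_{p\in\Ass(S/J)}(J^{mt}S_p\cap S)$ and establish the stronger local statement $(J_p^m)^{(t)}=J_p^{mt}$ for all $t\geq 1$ once $m\geq n_p$, via a primary-decomposition analysis of which components of $\mathfrak{a}^{mt}$ survive the symbolic-power intersection. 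That costs a bit more bookkeeping (localization and contraction of an irredundant primary decomposition) but buys a cleaner intermediate claim and makes transparent why stabilization at the exponent $m$, not $mt$, is the decisive point; both routes produce the same $m_0$, and your side remarks (that $p\in V(J^{(m)})$, and that Example~\ref{Counter} shows the threshold is needed) are accurate.
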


\begin{proof}
(a): Let $x\in J^{(mt)}$. Then by definition there exists $c\in S$ which is a non-zero divisor on $S/J$ such that $cx\in J^{mt}\subseteq (J^{(m)})^t$. By Remark~\ref{Localizations}~(a) we see that $c$ is also a non-zero divisor on $S/J^{(m)}$ and therefore $x\in (J^{(m)})^{(t)}$.\\
(b): Let $\Ass(S/J)=\{p_1,\ldots,p_r\}$; it is well-known 
that there exist integers $m_1,\ldots,m_r$ such that $\Ass(S_{p_i}/J_{p_i}^m)=\Ass(S_{p_i}/J_{p_i}^{m_i})$ for every $m\geq m_i$ \cite{Bro}. Let $m_0=\max\{m_i\}$.
By (a) we only need to prove $\left[ J^{(m)}\right]^{(t)}\subseteq J^{(mt)}$ for all $m\geq m_0$ and $t\geq 1$. It suffices to prove it locally at every associated prime $q$ of $J^{(mt)}$.

By Remark~\ref{Localizations}~(a) there exists $p\in \Ass(S/J)$ such that $q\subseteq p$. By Remark~\ref{Localizations}~(c) and (b) we have
$$\left(\left[ J^{(m)}\right]^{(t)}\right)_p\subseteq \left[\left( J^{(m)}\right)_p \right]^{(t)}=\left[ J_p^m\right]^{(t)}.$$
Now observe that since $q\in \Ass(S/J^{(mt)})$ and $q\subseteq p$, then $q\in \Ass(S_p/(J^{(mt)})_p)$ and then $q\in \Ass(S_p/(J^{mt})_p)$ by Remark~\ref{Localizations}~(b). Since $mt\geq m\geq m_0$, then $q\in \Ass(S_p/J_p^m)$. Therefore, by Remark~\ref{Localizations}~(b) one has $\left[(J_p^m)^{(t)}\right]_q=\left[J_p^{mt}\right]_q=J_q^{mt}=\left[J^{(mt)}\right]_q$.
\end{proof}

We now go back to our original setting.
\begin{Lemma} \label{HomogeneousIdeal1lemma}
Let $R=k[x_0, \ldots, x_N]$, where $k$ is any field. Let  $I$ be a homogeneous ideal in $R$ and assume $\gamma(I)> \frac{\alpha(I)}{N}$.  Then there exists an integer $t_0>0$ such that $I^{(t)}$ satisfies Conjecture~\ref{FMX} for all $t\geq t_0$.
\end{Lemma}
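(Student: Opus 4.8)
The plan is to verify Conjecture~\ref{FMX} for the ideal $I^{(t)}$ directly, reducing it to two standard inequalities — the elementary bound $\alpha(I^{(t)})\le t\,\alpha(I)$ and the Waldschmidt estimate $\alpha(I^{(\ell)})\ge \ell\,\gamma(I)$ — and then using the \emph{strict} hypothesis $\gamma(I)>\alpha(I)/N$ to absorb the additive constant $N-1$ once $t$ is large enough.

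First I would apply Proposition~\ref{m_0}~(b) to fix an integer $m_0\ge 1$ such that $(I^{(t)})^{(m)}=I^{(tm)}$ for all $t\ge m_0$ and all $m\ge 1$. This is the step that makes the statement computable: for $t\ge m_0$ the $m$-th symbolic power of $I^{(t)}$ is an honest symbolic power of $I$, so that $\alpha\big((I^{(t)})^{(m)}\big)=\alpha(I^{(tm)})$.

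Next, for $t\ge m_0$ and any $m\ge 1$, I would chain the estimates
$$
\frac{\alpha\big((I^{(t)})^{(m)}\big)}{m}
\;=\;\frac{\alpha(I^{(tm)})}{m}
\;\ge\; t\,\gamma(I)
\;\ge\; \frac{t\,\alpha(I)+(N-1)}{N}
\;\ge\; \frac{\alpha(I^{(t)})+(N-1)}{N},
$$
where the first inequality is $\alpha(I^{(\ell)})\ge \ell\,\gamma(I)$ applied with $\ell=tm$, the last uses $\alpha(I^{(t)})\le\alpha(I^{t})\le t\,\alpha(I)$ (valid because $I^{t}\subseteq I^{(t)}$ and $I\ne 0$), and the middle inequality $Nt\,\gamma(I)\ge t\,\alpha(I)+(N-1)$ is equivalent to $t\big(N\gamma(I)-\alpha(I)\big)\ge N-1$. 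Since $\gamma(I)>\alpha(I)/N$, the quantity $N\gamma(I)-\alpha(I)$ is a fixed positive constant, so the middle inequality — hence the whole chain — holds as soon as $t\ge t_0:=\max\{\,m_0,\ \lceil (N-1)/(N\gamma(I)-\alpha(I))\rceil\,\}$. That gives exactly Conjecture~\ref{FMX} for $I^{(t)}$ for every $t\ge t_0$.

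The only nontrivial ingredient is the appeal to Proposition~\ref{m_0}~(b): without knowing that $(I^{(t)})^{(m)}$ stabilizes to $I^{(tm)}$ for $t\gg 0$ — which, as Example~\ref{Counter} illustrates, can genuinely fail at small exponents — the left-hand side of the chain need not equal $\alpha(I^{(tm)})/m$, and the argument would require extra care. Everything else is bookkeeping with the two standard inequalities, and the role of the strict hypothesis is precisely to provide a gain of size $N\gamma(I)-\alpha(I)>0$ per unit increase of $t$, which eventually outweighs the constant $N-1$.
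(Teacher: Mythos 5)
Your proof is correct and is essentially the paper's own argument: both fix $m_0$ via Proposition~\ref{m_0}~(b) so that $(I^{(t)})^{(m)}=I^{(tm)}$, then combine $\alpha(I^{(tm)})/m\geq t\,\gamma(I)$ with $\alpha(I^{(t)})\leq t\,\alpha(I)$ and choose $t_0$ large enough that $t\bigl(N\gamma(I)-\alpha(I)\bigr)\geq N-1$, which is exactly the paper's choice $t_0\geq\max\{\frac{N-1}{N\epsilon},m_0\}$ with $\epsilon=\gamma(I)-\alpha(I)/N$. No gaps; the two write-ups differ only in notation.
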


\demo Let $m_0$ be as in Proposition~\ref{m_0}~(b) and write $\gamma(I)=\frac{\alpha(I)}{N}+\epsilon$ for some $\epsilon>0$.
Let  $t_0 \geq \max\{\frac{N-1}{N\epsilon}, m_0\}$. Then if $t\geq t_0$ and $m \geq 1$ we have
\begin{eqnarray*}
\frac{\alpha((I^{(t)})^{(m)})}{m}&=&\frac{\alpha(I^{(tm)})}{m}\geq \gamma(I) \cdot t= \frac{\alpha(I)\,t}{N}+ \varepsilon t\\
&\geq& \frac{\alpha(I)\,t}{N}+  \frac{N-1}{N t_0} t\geq \frac{\alpha(I^{(t)})+N-1}{N}.
\end{eqnarray*}
\QED

We are ready to prove the main result of this section.

\begin{Theorem} \label{HomogeneousIdeal3}
Let $R=k[x_0, \ldots, x_N]$, where $k$ is any field and let $I$ be a nonzero homogeneous ideal in $R$.  Then there exists an integer $t_0>0$ such that $I^{(t)}$ satisfies Conjecture~\ref{FMX} for all $t\geq t_0$.
\end{Theorem}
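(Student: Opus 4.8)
The plan is to reduce to two exhaustive cases according to whether the Waldschmidt--Skoda inequality $\gamma(I)\geq \alpha(I)/N$ (valid for every homogeneous ideal, as recalled above) is strict or an equality. We may assume $I$ is a proper ideal, so that $\alpha(I)\geq 1$; and we may assume $N\geq 2$, since for $N=1$ the inequality in Conjecture~\ref{FMX} coincides with the Waldschmidt--Skoda inequality and therefore already holds with $t_0=1$.

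If $\gamma(I)>\alpha(I)/N$, then Lemma~\ref{HomogeneousIdeal1lemma} immediately provides the required $t_0$ and there is nothing more to do. The real content is the boundary case $\gamma(I)=\alpha(I)/N$, in which Lemma~\ref{HomogeneousIdeal1lemma} is vacuous. The key observation here is that $\alpha(I^{(t)})$ grows strictly more slowly than $t\,\alpha(I)$: writing $\alpha(I^{(t)})=t\bigl(\gamma(I)+\delta_t\bigr)$ with $\delta_t\geq 0$ and $\delta_t\to 0$ (this is just the fact that $\gamma(I)=\lim_{t\to\infty}\alpha(I^{(t)})/t$ and is an infimum), one has
$$
t\,\alpha(I)-\alpha(I^{(t)})=t\Bigl(\alpha(I)-\gamma(I)-\delta_t\Bigr)=t\Bigl(\tfrac{N-1}{N}\,\alpha(I)-\delta_t\Bigr)\longrightarrow\infty ,
$$
because $\tfrac{N-1}{N}\alpha(I)>0$. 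Hence there is an integer $t_1$ with $t\,\alpha(I)-\alpha(I^{(t)})\geq N-1$ for all $t\geq t_1$. Taking $m_0$ as in Proposition~\ref{m_0}~(b) and setting $t_0=\max\{t_1,m_0\}$, I would then argue that for every $t\geq t_0$ and every $m\geq 1$ one has $(I^{(t)})^{(m)}=I^{(tm)}$ by Proposition~\ref{m_0}~(b), whence, using $\alpha(I^{(n)})/n\geq\gamma(I)$ for every $n$,
$$
\frac{\alpha\bigl((I^{(t)})^{(m)}\bigr)}{m}=\frac{\alpha(I^{(tm)})}{m}=t\cdot\frac{\alpha(I^{(tm)})}{tm}\geq t\,\gamma(I)=\frac{t\,\alpha(I)}{N}\geq\frac{\alpha(I^{(t)})+N-1}{N},
$$
which is exactly Conjecture~\ref{FMX} for the ideal $I^{(t)}$.

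The main obstacle is the boundary case $\gamma(I)=\alpha(I)/N$, where one must manufacture the extra additive slack of $N-1$ between $t\,\alpha(I)$ and $\alpha(I^{(t)})$; this is produced by combining the strict inequality $\alpha(I)>\gamma(I)$ (valid since $N\geq 2$) with the convergence $\alpha(I^{(t)})/t\to\gamma(I)$. Beyond that, the argument is bookkeeping: checking that the two cases are genuinely exhaustive (the Waldschmidt--Skoda inequality) and invoking Proposition~\ref{m_0} to replace $(I^{(t)})^{(m)}$ by $I^{(tm)}$, which is legitimate only once $t\geq m_0$ — this is precisely why the threshold $t_0$ must be taken to be at least $m_0$.
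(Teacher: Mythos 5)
Your proof is correct, but it takes a genuinely different route from the paper's. The paper splits according to whether $s\,\alpha(I)=\alpha(I^{(s)})$ for every $s\geq 1$: if so, a direct computation finishes; if not, it shows the deficit $t\alpha(I)>\alpha(I^{(t)})$ persists for all $t\geq T_1$, deduces $\gamma(I^{(t_1)})\geq\frac{\alpha(I^{(t_1)})+1}{N}$ for $t_1=\max\{T_1,m_0\}$, applies Lemma~\ref{HomogeneousIdeal1lemma} to the ideal $I^{(t_1)}$ rather than to $I$, and then needs Lemma~\ref{HomogeneousIdeal2} plus a division-with-remainder estimate and the explicit choice $t_0 \geq t_1t_2+\frac{\alpha(I^{(t_1t_2)})t_1t_2}{N-1}$ to pass from multiples of $t_1t_2$ to all large $t$. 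You instead split on whether the Waldschmidt--Skoda inequality $\gamma(I)\geq\alpha(I)/N$ is strict (then Lemma~\ref{HomogeneousIdeal1lemma} applies directly to $I$) or an equality; in the boundary case you exploit $\alpha(I)>\gamma(I)$ (valid once $N\geq 2$ and $I$ is proper) together with the convergence $\alpha(I^{(t)})/t\to\gamma(I)$ to get $t\alpha(I)-\alpha(I^{(t)})\geq N-1$ for all large $t$, after which the one-line computation using $\alpha(I^{(tm)})/(tm)\geq\gamma(I)=\alpha(I)/N$ and Proposition~\ref{m_0}~(b) (applied correctly, with the inner exponent $t\geq m_0$) closes the argument; your two cases are exhaustive precisely because of the Waldschmidt--Skoda inequality, which the paper records for arbitrary homogeneous ideals, as it does the facts $\alpha(I^{(m)})/m\geq\gamma(I)$ and the existence of $\gamma(I)$ as a limit and infimum, so everything you invoke is available. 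Your version is shorter and avoids both Lemma~\ref{HomogeneousIdeal2} and the bootstrap through $I^{(t_1)}$. What the paper's longer route buys is effectivity: it never appeals to the unquantified rate of convergence of $\alpha(I^{(t)})/t$ nor to the unknown gap $\epsilon=\gamma(I)-\alpha(I)/N$, and this is what lets the explicit bound of Corollary~\ref{t_0} ($t_0=(N-1)\delta$ when $I$ has no embedded primes) be extracted from its argument, whereas your threshold is non-effective in both of your cases.
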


\demo Let $m_0$ be as in Proposition ~\ref{m_0}~(b) and $m \geq 1$ be an integer. Since $I^m\subseteq I^{(m)}$, then $m \alpha(I)\geq \alpha (I^{(m)})$. First, if for every $s\geq 1$ we have $s \alpha(I)=\alpha (I^{(s)})$,
 then for any $t\geq m_0$ and $m\geq 1$, $$
\frac{\alpha((I^{(t)})^{(m)})}{m}=\frac{\alpha(I^{(tm)})}{m}=\frac{tm \alpha(I)}{m}= t\alpha (I)\geq \alpha (I^{(t)})\geq \frac{\alpha (I^{(t)}) + N-1}{N}.
$$
Next, suppose that there exists $T_1>0$ such that $T_1 \alpha(I)>\alpha (I^{(T_1)})$. Hence, for every $t\geq T_1$ one has $t \alpha(I)>\alpha(I^{(t)})$. Indeed, if $t=T_1+a$ for some $a\geq 0$, then
$$t\alpha(I)=T_1\alpha(I)+a\alpha(I)>\alpha(I^{(T_1)})+a\alpha(I) =\alpha(I^{(T_1)}\cdot I^a)\geq \alpha(I^{(T_1+a)})=\alpha(I^{(t)}),$$ where the last inequality follows from the inclusion $I^{(T_1)}\cdot I^a\subseteq I^{(T_1+a)}$.

Let $t_1=\max\{T_1,m_0\}$ and notice that by the above $t_1\alpha(I)\geq \alpha(I^{(t_1)})+1$. Then for all $m\geq 1$
$$
\frac{\alpha((I^{(t_1)})^{(m)})}{m}=\frac{\alpha(I^{(t_1 m)})}{m}=\frac{\alpha(I^{(t_1 m)})}{t_1 m}\cdot t_1\geq \frac{\alpha(I)\,t_1}{N} \geq \frac{\alpha(I^{(t_1)})+1}{N}.$$
So $\gamma(I^{(t_1)})\geq \frac{\alpha(I^{(t_1)})}{N}+\frac{1}{N} > \frac{\alpha(I^{(t_1)})}{N}.
$
By Lemma~\ref{HomogeneousIdeal1lemma}, there exists $t_2>0$ such that for any  $t\geq t_2$, the ideal $(I^{(t_1)})^{(t)}=I^{(t_1 t)}$ satisfies Conjecture~\ref{FMX}.

Finally, let $t_0$ be an integer such that $t_0 \geq t_1t_2+\frac{\alpha\left(I^{(t_1 t_2)}\right)t_1 t_2}{N-1}$. For any $t\geq t_0$, write $t=(t_1 t_2)q +r$, where $0\leq r< t_1 t_2$; by Lemma~\ref{HomogeneousIdeal2} and the fact that the ideal $I^{(t_1 t_2)}$ satisfies Conjecture~\ref{FMX}, then for all $m\geq 1$ we have
\begin{eqnarray*}
\frac{\alpha((I^{(t)})^{(m)})}{m}&\geq& \frac{\alpha((I^{(t)})^{(t_1 t_2m)})}{t_1 t_2 m}=\frac{\alpha((I^{(t_1t_2)})^{(tm)})}{t m}\cdot \frac{t}{t_1t_2}\\
&\geq& \frac{\alpha(I^{(t_1t_2)})+N-1}{N}\cdot \frac{t}{t_1t_2}=\frac{\alpha(I^{(t_1t_2)})}{t_1t_2}\cdot \frac{t}{N}+\frac{(N-1)t}{Nt_1t_2}\\
&\geq& \left(\frac{\alpha(I^{(t)})}{t}-\frac{\alpha(I^{(r)})}{t}\right)\cdot \frac{t}{N}+\frac{(N-1)t}{Nt_1t_2}=\frac{\alpha(I^{(t)})}{N}-\frac{\alpha(I^{(r)})}{N}+\frac{(N-1)t}{Nt_1t_2}\\
&=&\frac{\alpha(I^{(t)})+N-1}{N}+\frac{(N-1)(t-t_1t_2)-\alpha(I^{(r)}) t_1t_2}{Nt_1t_2}\\
&\geq& \frac{\alpha(I^{(t)})+N-1}{N}+\frac{\alpha(I^{(t_1 t_2)})t_1 t_2-\alpha(I^{(r)}) t_1t_2}{Nt_1t_2}\\
&\geq& \frac{\alpha(I^{(t)})+N-1}{N}.
\end{eqnarray*}
\QED

When $I$ has no embedded components, we have a more explicit description of $t_0$. 
\begin{Corollary}\label{t_0}
If $I$ is a homogeneous ideal with $\Ass(R/I)={\rm Min}(I)$, then one can take $t_0=(N-1)\delta$, where $\delta$ is the first positive integer $s$ with $s\alpha(I)>\alpha(I^{(s)})$.
\end{Corollary}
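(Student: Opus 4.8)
The plan is to leverage the hypothesis $\Ass(R/I)=\mathrm{Min}(I)$ twice: once to get the identity $(I^{(t)})^{(m)}=I^{(tm)}$ for all $m,t\ge 1$ (so that $m_0=1$ in Proposition~\ref{m_0}), and once to control $\alpha(I^{(t)})$ via the subadditivity $I^{(a)}I^{(b)}\subseteq I^{(a+b)}$. First I would dispose of the degenerate situation in which no positive integer $s$ satisfies $s\alpha(I)>\alpha(I^{(s)})$: since $I^{s}\subseteq I^{(s)}$ always forces $\alpha(I^{(s)})\le s\alpha(I)$, in that case $\alpha(I^{(s)})=s\alpha(I)$ for all $s$, whence $\alpha(I^{(m)})/m=\alpha(I)\ge(\alpha(I)+N-1)/N$ and $I$ — hence every $I^{(t)}$ — already satisfies Conjecture~\ref{FMX}. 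So we may assume $\delta$ is well defined.

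The crux is the elementary bound
$$\alpha(I^{(t)})\le t\,\alpha(I)-\lfloor t/\delta\rfloor\qquad\text{for every }t\ge 1.$$
To prove it, write $t=q\delta+r$ with $q=\lfloor t/\delta\rfloor$ and $0\le r<\delta$. Iterating $I^{(a)}I^{(b)}\subseteq I^{(a+b)}$ (valid because $\Ass(R/I)=\mathrm{Min}(I)$; one checks both sides after localizing at each minimal prime, as in Remark~\ref{Localizations} and Proposition~\ref{m_0}) together with $I^{r}\subseteq I^{(r)}$ shows $(I^{(\delta)})^{q}I^{r}\subseteq I^{(t)}$, so $\alpha(I^{(t)})\le q\,\alpha(I^{(\delta)})+r\,\alpha(I)$. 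By minimality of $\delta$ one has $\alpha(I^{(\delta)})<\delta\alpha(I)$, and since both sides are integers $\alpha(I^{(\delta)})\le\delta\alpha(I)-1$; substituting gives the displayed bound.

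Next I would fix $t\ge(N-1)\delta$, so that $\lfloor t/\delta\rfloor\ge N-1$ and hence $\alpha(I^{(t)})+(N-1)\le t\,\alpha(I)$. Combining this with the Waldschmidt--Skoda inequality $\gamma(I)\ge\alpha(I)/N$ yields $\tfrac{1}{N}\big(\alpha(I^{(t)})+N-1\big)\le t\,\gamma(I)$. Finally, for every $m\ge 1$, using $(I^{(t)})^{(m)}=I^{(tm)}$ and the standard estimate $\alpha(I^{(s)})/s\ge\gamma(I)$ (see \cite{BH}),
$$\frac{\alpha\big((I^{(t)})^{(m)}\big)}{m}=\frac{\alpha(I^{(tm)})}{tm}\cdot t\ \ge\ \gamma(I)\,t\ \ge\ \frac{\alpha(I^{(t)})+N-1}{N},$$
which is precisely Conjecture~\ref{FMX} for $I^{(t)}$. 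Hence $t_0=(N-1)\delta$ is admissible in Theorem~\ref{HomogeneousIdeal3}.

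I do not expect a genuine obstacle here; this corollary is essentially a bookkeeping refinement of Theorem~\ref{HomogeneousIdeal3} once the bound $\alpha(I^{(t)})\le t\alpha(I)-\lfloor t/\delta\rfloor$ is available. The one point deserving a little care is confirming that $I^{(a)}I^{(b)}\subseteq I^{(a+b)}$ and $(I^{(t)})^{(m)}=I^{(tm)}$ really hold under $\Ass(R/I)=\mathrm{Min}(I)$ alone (and not merely for radical ideals) — but this is exactly what is recorded just before Example~\ref{Counter} and in Proposition~\ref{m_0}, so it can simply be invoked.
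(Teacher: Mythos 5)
Your proof is correct, and every ingredient you invoke is indeed available in the paper: the identities $(I^{(t)})^{(m)}=I^{(tm)}$ and $\Ass(R/I^{(m)})=\Ass(R/I)$ for ideals with $\Ass(R/I)={\rm Min}(I)$ (recorded just before Example~\ref{Counter}, together with Remark~\ref{Localizations}), the containment $I^{(a)}I^{(b)}\subseteq I^{(a+b)}$, the inequality $\alpha(I^{(s)})/s\geq\gamma(I)$, and the Waldschmidt--Skoda bound $\gamma(I)\geq\alpha(I)/N$. The paper states Corollary~\ref{t_0} without proof, as an implicit by-product of the proof of Theorem~\ref{HomogeneousIdeal3} with $m_0=1$ (so $t_1=\delta$ and $t_2=N-1$); note, however, that a literal specialization of that proof does not give the stated constant: the second stage only yields Conjecture~\ref{FMX} for the multiples $I^{(\delta s)}$, $s\geq N-1$, and the final stage requires $t\geq t_1t_2+\alpha(I^{(t_1t_2)})t_1t_2/(N-1)$, which is larger than $(N-1)\delta$. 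Your direct argument is exactly what substantiates the bound $(N-1)\delta$: iterating $\alpha(I^{(\delta)})\leq\delta\alpha(I)-1$ via $(I^{(\delta)})^{q}I^{r}\subseteq I^{(t)}$ gives $\alpha(I^{(t)})\leq t\alpha(I)-\lfloor t/\delta\rfloor$, hence $\alpha(I^{(t)})+N-1\leq t\alpha(I)\leq Nt\gamma(I)$ for $t\geq(N-1)\delta$, and then $\alpha\bigl((I^{(t)})^{(m)}\bigr)/m=t\cdot\alpha(I^{(tm)})/(tm)\geq t\gamma(I)$ closes the argument. This is a sharpened, quantified form of the step $t\alpha(I)>\alpha(I^{(t)})$ for $t\geq T_1$ that appears inside the theorem's proof, so your route is consistent with the paper's toolkit while actually delivering the claimed $t_0=(N-1)\delta$; your treatment of the degenerate case (no $s$ with $s\alpha(I)>\alpha(I^{(s)})$, where $\delta$ is undefined and the statement is vacuous anyway) is also fine, since under the hypothesis $(I^{(t)})^{(m)}=I^{(tm)}$ the verification is immediate.
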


Although $(N-1)\delta$ is reasonably small, in general it is not the smallest possible $t_0$ for which Theorem \ref{HomogeneousIdeal3} holds. For instance, when $I$ is the ideal of three non collinear points in $\mathbb{P}_{k}^2$, it is easy to see that $\delta=2$. Thus Corollary \ref{t_0} yields that for any $t\geq (N-1)\delta=2$ the ideal $I^{(t)}$ satisfies Conjecture \ref{FMX}; however, it is well-known that $I$ satisfies Conjecture \ref{FMX}. A natural question then arises.

\begin{Question} Let $I$ be a homogeneous ideal in $R$.
Does there exist a number   $t_0=t_0(N)$ such that  $I^{(t)}$ satisfies Conjecture~\ref{FMX} for every $t\geq t_0$?
\end{Question}

Of course, Conjecture~\ref{FMX} is true if and only if the integer $t_0=1$ works for any homogeneous ideal $I$.
Theorem~\ref{main2} says that $t_0=1$ is sufficient for any  finite set of very general points in $\mathbb P_k^N$. The following proposition shows that $t_0=N-1$ is sufficient for any finite set of  points in $\mathbb P_{\mathbb C}^N$.

\begin{Proposition} \label{HomogeneousIdeal11}
Let $I$ be the radical  ideal of a finite set of points in $\mathbb P^N_{\mathbb{C}}$.
 Then $I^{(t)}$ satisfies Conjecture~\ref{FMX} for every $t\geq N-1$.
\end{Proposition}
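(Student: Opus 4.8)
The plan is to exploit two features special to ideals of reduced point sets over $\mathbb{C}$. First, radicality makes symbolic powers compose: since $\Ass(R/I)=\mathrm{Min}(I)$, one has $(I^{(t)})^{(m)}=I^{(tm)}$ for all $t,m\geq 1$ (recorded right before Example~\ref{Counter}; equivalently it is the case $m_0=1$ of Proposition~\ref{m_0}). Hence verifying Conjecture~\ref{FMX} for $I^{(t)}$ is exactly the statement
$$\frac{\alpha(I^{(tm)})}{m}\geq \frac{\alpha(I^{(t)})+N-1}{N}\qquad\text{for every }m\geq 1.$$
Second, the Esnault--Viehweg inequality \cite{EV} over $\mathbb{C}$ improves the Waldschmidt--Skoda bound by \emph{one} unit for each symbolic power of $I$, and rescaling that gain by $t$ is precisely what produces the $N-1$ units demanded by Chudnovsky's inequality once $t\geq N-1$.

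Carrying this out: for the left-hand side, the inequality $\alpha(I^{(j)})/j\geq \gamma(I)$, valid for every homogeneous ideal (see \cite{BH}), applied with $j=tm$ gives $\alpha(I^{(tm)})/m\geq t\,\gamma(I)$. Letting $m\to\infty$ in the Esnault--Viehweg bound $\alpha(I^{(m)})/m\geq(\alpha(I)+1)/N$ yields $\gamma(I)\geq (\alpha(I)+1)/N$, whence $\alpha(I^{(tm)})/m\geq t(\alpha(I)+1)/N=(t\alpha(I)+t)/N$. For the right-hand side, $I^{t}\subseteq I^{(t)}$ gives $\alpha(I^{(t)})\leq \alpha(I^t)=t\alpha(I)$, and the hypothesis $t\geq N-1$ gives $t\alpha(I)+t\geq \alpha(I^{(t)})+(N-1)$. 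Concatenating, $\alpha(I^{(tm)})/m\geq (\alpha(I^{(t)})+N-1)/N$, as wanted. (One could equally avoid limits by applying the non-asymptotic bound $\alpha(I^{(j)})\geq j(\alpha(I)+1)/N$ directly with $j=tm$; the degenerate case $N=1$ is trivial and may be set aside.)

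I do not anticipate a genuine obstacle: after the reduction, every step is a citation or a one-line comparison of integers. The only point requiring care is the identity $(I^{(t)})^{(m)}=I^{(tm)}$, which really does fail for ideals with embedded components --- that is the content of Example~\ref{Counter} --- but holds automatically here because point ideals are radical; were radicality dropped, one would instead have to route through Proposition~\ref{m_0}~(b) and keep track of the threshold $m_0$, exactly as in Theorem~\ref{HomogeneousIdeal3}. Conceptually the proposition is simply the observation that Esnault--Viehweg's ``$+1$'', multiplied by $t$ when $I$ is replaced by $I^{(t)}$, matches Chudnovsky's ``$+(N-1)$'' precisely when $t\geq N-1$; this explains why $t_0=N-1$ is the natural threshold here, improving the value $(N-1)\delta$ that Corollary~\ref{t_0} would give.
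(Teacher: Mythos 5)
Your argument is correct and is essentially the paper's own proof: the paper likewise takes $\varepsilon=\tfrac{1}{N}$ from the Esnault--Viehweg bound $\gamma(I)\geq\frac{\alpha(I)+1}{N}$, notes $m_0=1$ because $I$ is radical, and invokes the computation in the proof of Lemma~\ref{HomogeneousIdeal1lemma}, which is exactly the chain $\frac{\alpha(I^{(tm)})}{m}\geq t\,\gamma(I)\geq\frac{t\alpha(I)+t}{N}\geq\frac{\alpha(I^{(t)})+N-1}{N}$ that you wrote out explicitly. The only difference is presentational: you unpack that lemma's argument (and note the trivial $N=1$ case) rather than citing it.
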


\demo
By the result of Esnault and Viehweg \cite{EV} one has $\gamma(I)\geq \frac{\alpha(I)+1}{N}= \frac{\alpha(I)}{N}+ \frac{1}{N}$. Set $\varepsilon=\frac{1}{N}$. Then by the proof of Lemma~\ref{HomogeneousIdeal1lemma} (here $m_0=1$ because $I$ is radical) we can take $t_0$ such that $\frac{1}{N}\geq \frac{N-1}{N t_0}$; thus we can take  $t_0= N-1$.
\QED
\bigskip

\subsection{Acknowledgment}
The second and third authors would like to thank the Mathematics Research Communities program, which funded their stay at the University of Kansas in March 2011, where the initial part of this work was developed. All authors would like to thank the MSRI at Berkeley for partial support and an inspiring atmosphere during Fall 2012. Moreover, we would like to thank Craig Huneke and Bernd Ulrich for several helpful conversations.
We are grateful to the anonymous referee, whose careful revision helped us improve the article.

\end{document}